\theoremstyle{definition}
\newtheorem{defn}{Definition}[section]
\newtheorem{rmk}{Remark}
\theoremstyle{plain}
\newtheorem{lem}[defn]{Lemma}
\newtheorem{prop}[defn]{Proposition}
\newtheorem{thm}[defn]{Theorem}
\newtheorem*{thm_no_numbering}{Theorem}
\newtheoremstyle{exp}
{\topsep}
{\topsep}
{\normalfont}
{0pt}
{}
{.}
{ }
{\thmname{#1}\thmnumber{ #2}\textnormal{\thmnote{ (#3)}}}
\theoremstyle{exp}
\newtheoremstyle{named}{}{}{\itshape}{}{\bfseries}{.}{.5em}{\thmnote{#3}#1}
\theoremstyle{named}
\newtheorem*{namedtheorem}{}
\newcommand{\C}{\mathbb{C}}
\newcommand{\R}{\mathbb{R}}
\newcommand{\Z}{\mathbb{Z}}
\newcommand{\N}{\mathbb{N}}
\newcommand{\supp}{\mathrm{supp}}
\DeclareMathOperator{\Avg}{Avg}
\numberwithin{equation}{section}
\begin{document}
		\title{An $H^1$ multiplier theorem on anisotropic Hardy spaces}
	\author{Yiyu Tang}
		\address{Faculty of Mathematics and Computer Science, Nicolaus Copernicus University, Chopin street 12/18, 87-100 Toruń, Poland}
	\email{ytang@mat.umk.pl}
	
	\subjclass[]{}
	\maketitle
	\begin{abstract}
		We obtain a parallel result of Sledd--Stegenga's $H^1\rightarrow L^1$ multiplier theorem under the anisotropic settings. Based on the same technique, we also prove an $H^1\rightarrow L^p$ multiplier theorem for $1\leq p<\infty$.
	\end{abstract}	
\section{Introduction}\label{Section: Introduction}
Let $H^p(\R^d)$ be the real Hardy spaces (we assume $0<p\leq1$). The following Hardy--Littlewood inequality is well known (see, for example, \cite{Stein_Harmonic_Analysis}, Chapter \RN{3}, Section 5.4)
\begin{equation*}
	\Big(\int_{\R^d}\frac{|\widehat{f}|^p}{|\xi|^{(2-p)d}}\,\mathrm{d}\xi\Big)^\frac{1}{p}
	\lesssim_{p,d}
	\|f\|_{H^p(\R^d)}.
\end{equation*}
When $p=1$, we have a stronger result, due to Sledd and Stegenga (see \cite{Sledd--Stegenga}, Theorem~1).
\begin{namedtheorem}[The $H^1$ boundedness criterion]
	 Let $\mu$ be a positive Borel measure on $\R^d\setminus\{0\}$, then
	 \begin{equation*}
	 	\int_{\R^d}|\widehat{f}(\xi)|\,\mathrm{d}\mu(\xi)	\lesssim\|f\|_{H^1(\R^d)},
	 \end{equation*} 
if and only if
	\begin{equation*}
		\sup_{\epsilon>0}\Big(\sum_{\alpha\in\Z^d\setminus\{0\}}\mu\big(Q_{\epsilon}(\alpha)\big)^2\Big)^{\frac{1}{2}}<\infty,
	\end{equation*}
	where $Q_{\epsilon}(\alpha)$ is the cube $\{(x_i)_{i=1}^d\in\R^d:\epsilon\alpha_i-\frac{\epsilon}{2}
	\leq
	x_i
	<
	\epsilon\alpha_i+\frac{\epsilon}{2}
	\}$.
\end{namedtheorem}
It is known that, under the anisotropic settings $H^p_{A}$ (we assume $0<p\leq1$), the Hardy--Littlewood inequality still holds true: Let $A$ be a dilation matrix, and $\rho_{\ast}(\cdot)$ is a quasi-norm associated with $A^\ast$, then
\begin{equation*}
	\Big(\int_{\R^d}\frac{|\widehat{f}|^p}{\rho_{\ast}(\xi)^{2-p}}\,\mathrm{d}\xi\Big)^\frac{1}{p}
	\lesssim_{p,d,A}
	\|f\|_{H^p_{A}(\R^d)}.
\end{equation*}
This inequality was proved by Bownik--Wang, see \cite{Bownik--Wang}, Corollary~8.

However, the anisotropic analog of Sledd--Stegenga's criterion seems to be absent so far. We close this gap in this note. The statement of the criterion under the anisotropic settings is more complicated and requires more preparation. Moreover, some arguments in \cite{Sledd--Stegenga} does not work any more, and we need few modifications.

Also, there are gaps in the original proofs of \cite{Sledd--Stegenga}, which either omit many non-trivial steps or are too specific (only the 1-dimensional case is proved, whereas the higher-dimensional case is not a direct result thereof). Here we complete the missing details in \cite{Sledd--Stegenga} incidentally.

\section{Acknowledgement}
This paper was completed at Nicolaus Copernicus University in Toruń, where the author was a postdoctoral researcher at that time. The author would like to thank Yuri Tomilov for introducing him to the paper \cite{Sledd--Stegenga}. The author also express the gratitude to Marcing Bownik for introducing him the theory of anisotropic Hardy spaces.

\section{Preliminaries}
In this section, most of the text is adapted from \cite{Bownik_book}.
\subsection{Dilation matrix and its homogeneous norm}
 A matrix $A\in\mathbf{GL}(d,\R)$ is a \textit{dilation matrix}, if all its eigenvalues $\{\lambda_i\}_{i=1}^d$ have modulus strictly greater than $1$. 

For each dilation matrix $A$, there exists a measurable function (not unique) $\rho_A \colon \R^d \rightarrow [0,\infty)$, called a \textit{homogenous quasi-norm associated with $A$}, satisfying the following properties:
\begin{enumerate}
	\item $\rho_A(x)=0$ if and only if $x=0$.
	\item $\rho_A(Ax)=b\rho_A(x)$ for all $x\in\R^d$, where  $b\coloneqq|\det(A)|$.
	\item $\rho_A(x+y)\leq c(\rho_A(x)+\rho_A(y))$ for all $x,y\in\R^d$, where $c>0$ is an absolute constant independent of $x$ and $y$.
\end{enumerate}
We choose $\lambda_{-}$ and $\lambda_{+}$ so that $1<\lambda_{-}<|\lambda_1|\leq\ldots\leq|\lambda_d|<\lambda_{+}$. The following growth estimate of $\rho_A$ is known.
\begin{prop}(see \cite{Lemarie-Rieusset}, Appendix B)\label{Prop: comparison on the growth of the quasi norm and Euclidean norm}
	Suppose that $\rho_A$ is a homogenous quasi-norm associated with the matrix $A$, then there is a constant $c_A>0$, so that
	\begin{equation*}
		\begin{aligned}
			c_A^{-1}\rho_{A}(x)^{\zeta_{-}}
			&\leq|x|\leq
			c_A\rho_{A}(x)^{\zeta_{+}},\text{ if }\rho_A(x)\geq1,\\
			c_A^{-1}\rho_{A}(x)^{\zeta_{+}}
			&\leq|x|\leq
			c_A\rho_{A}(x)^{\zeta_{-}},\text{ if }\rho_A(x)<1.
		\end{aligned}
	\end{equation*}
Here $\zeta_{\pm}\coloneqq \ln\lambda_{\pm}/\ln b$, and $c_A$  only depends on $\zeta_{\pm}$.
\end{prop}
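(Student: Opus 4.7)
The strategy is to reduce the growth estimate to a single bounded-annulus computation under the $A$-action. First I would invoke the standard fact, proved in \cite{Bownik_book}, that any two homogenous quasi-norms associated with the same dilation matrix $A$ are mutually equivalent; this lets us replace the given $\rho_A$ by the canonical \emph{step} quasi-norm $\rho_A^{\mathrm{step}}$ built from an expanding ellipsoid $\Omega$ with $A\Omega\supset\overline{\Omega}$ and $|\Omega|=1$. The advantage of this particular choice is that the annulus $\Delta:=\{y\in\R^d:1\leq\rho_A^{\mathrm{step}}(y)<b\}$ is, by construction, both bounded and bounded away from the origin in the Euclidean sense, so $|y|\asymp 1$ uniformly on $\Delta$.

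Given $x\neq 0$, since $\R^d\setminus\{0\}=\bigsqcup_{k\in\Z}A^k\Delta$ by property (2), there is a unique integer $k=k(x)$ with $A^{-k}x\in\Delta$. Writing $y:=A^{-k}x$, property (2) yields $\rho_A^{\mathrm{step}}(x)=b^k\rho_A^{\mathrm{step}}(y)\asymp b^k$ together with $|y|\asymp 1$, and the whole problem reduces to comparing $|A^k y|$ to a power of $b^k$.

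The central technical input is the spectral estimate
\begin{equation*}
C^{-1}\lambda_{-}^{k}|y|\leq|A^{k}y|\leq C\lambda_{+}^{k}|y|\quad(k\geq 0),\qquad C^{-1}\lambda_{+}^{k}|y|\leq|A^{k}y|\leq C\lambda_{-}^{k}|y|\quad(k\leq 0),
\end{equation*}
which I would derive from Gelfand's spectral-radius formula applied to $A$ and to $A^{-1}$: because $\lambda_{-}<\min_i|\lambda_i|$ and $\max_i|\lambda_i|<\lambda_{+}$ \emph{strictly}, the polynomial factors arising from nontrivial Jordan blocks of $A$ and $A^{-1}$ can be absorbed into the constant $C$. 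This is the only nontrivial step of the argument, and I expect it to be the main obstacle; the strictness of the choice of $\lambda_{\pm}$ is precisely what makes it work.

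Finally, using $\lambda_{\pm}=b^{\zeta_{\pm}}$, the two regimes of the proposition follow by matching the sign of $k$ with the size of $\rho_A^{\mathrm{step}}(x)$: the relation $\rho_A^{\mathrm{step}}(x)\asymp b^k$ puts $x$ in the first case of the proposition precisely when $k\geq 0$, whence the spectral estimate yields $\rho_A^{\mathrm{step}}(x)^{\zeta_{-}}\lesssim|x|\lesssim\rho_A^{\mathrm{step}}(x)^{\zeta_{+}}$; in the second case $k<0$ and the corresponding estimate for negative $k$ swaps the roles of $\zeta_{-}$ and $\zeta_{+}$. Passing from $\rho_A^{\mathrm{step}}$ back to the original $\rho_A$ by the equivalence established in the first paragraph absorbs the remaining multiplicative constant into $c_A$.
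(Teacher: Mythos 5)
The paper does not actually supply a proof of this proposition; it simply cites Lemari\'e-Rieusset, Appendix B. So there is no in-text argument to compare against. Your proposal, however, is the standard and correct derivation, and the two substantive ingredients are sound: the annulus decomposition $\R^d\setminus\{0\}=\bigsqcup_{k}A^k\Delta$ with $|y|\asymp1$ on $\Delta$, and the spectral bounds $C^{-1}\lambda_-^{k}|y|\leq|A^ky|\leq C\lambda_+^{k}|y|$ for $k\geq0$ (and the reversed pair for $k\leq0$), which do follow from Gelfand's formula for $A$ and $A^{-1}$ because $\lambda_-<\min_i|\lambda_i|\leq\max_i|\lambda_i|<\lambda_+$ are strict; the Jordan polynomial factors are $o(\epsilon^k)$ and get absorbed. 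Matching $\lambda_\pm=b^{\zeta_\pm}$ with $\rho^{\mathrm{step}}(x)\asymp b^k$ then gives exactly the two regimes of the proposition.

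One small point you glossed over: after replacing $\rho_A$ by $\rho^{\mathrm{step}}_A$, the thresholds $\rho_A(x)\geq1$ and $\rho^{\mathrm{step}}_A(x)\geq1$ do not coincide, only up to the equivalence constant $C_0$. So for $x$ with, say, $\rho_A(x)\geq1$ but $C_0^{-1}\leq\rho^{\mathrm{step}}_A(x)<1$, you land in the ``wrong'' case of the step estimate. This is harmless --- such $x$ lie in a fixed bounded annulus where $|x|\asymp1$ and $\rho_A(x)\asymp1$, so the desired inequalities hold trivially with a constant --- but it is not literally a ``multiplicative constant absorbed into $c_A$'' as you wrote; it is an extra (easy) case distinction near $\rho\approx1$ that is worth stating explicitly if you write this out in full.

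Finally, note that the paper asserts $c_A$ depends only on $\zeta_\pm$. Your route introduces constants from the quasi-norm equivalence and from the Jordan structure of $A$, which a priori depend on more than $\zeta_\pm$; if you want to match the statement verbatim you would have to be a bit more careful about tracking these, or simply observe (as the paper implicitly does) that the result will be used only up to unspecified constants depending on $A$.
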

The basic geometric objects in isotropic settings are Euclidean balls $B(x,r)$. For the anisotropic case, we have the following substitute. We say that $\Delta\subset\R^d$ is an \textit{ellipse}, if there exists $Q\in\mathbf{GL}(d,\R)$, so that $\Delta=Q^{-1}(B(0,1))$, where $B(0,1)$ is the (open) unit Euclidean ball in $\R^d$.
\begin{prop}(See \cite{Bownik_book}, Chapter 1, Lemma 2.2)\label{Prop: nested balls under anisotropic settings}
Suppose that $A$ is a dilation matrix, then there exists an ellipse $\Delta\subset\R^d$, so that
\begin{equation*}
	\Delta\subset A(\Delta).
\end{equation*} 
\end{prop}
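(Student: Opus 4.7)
The plan is to exhibit $\Delta$ as the open unit ball of a cleverly chosen Euclidean inner product on $\R^d$, namely one for which $A^{-1}$ becomes a strict contraction. Writing $\Delta = Q^{-1}(B(0,1))$, the target inclusion $\Delta \subset A(\Delta)$ is equivalent to $A^{-1}(\Delta) \subset \Delta$, which after conjugation by $Q$ says that $Q A^{-1} Q^{-1}$ sends the Euclidean unit ball into itself. So the task reduces to producing a symmetric positive-definite matrix $M$ (the Gram matrix of the new inner product) satisfying the strict Lyapunov inequality $(A^{-1})^T M A^{-1} \prec M$; then $Q := M^{1/2}$ delivers the ellipse.

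To build $M$ I would use the standard Lyapunov-type series
\[
M := \sum_{k=0}^{\infty} (A^{-k})^T A^{-k}.
\]
Since $A$ is a dilation matrix, every eigenvalue of $A^{-1}$ has modulus strictly less than one, so Gelfand's spectral radius formula yields $\|A^{-k}\| \lesssim r^{k}$ for some $r<1$, making the series convergent. The sum is symmetric and positive-definite, because each summand is symmetric positive-semidefinite and the $k=0$ term alone is the identity. A single index shift on the series produces the algebraic identity $(A^{-1})^T M A^{-1} = M - I$, which is precisely the strict Lyapunov inequality needed.

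With $M$ in hand I would check directly that $\Delta := \{x \in \R^d : x^T M x < 1\}$ satisfies $A^{-1}(\Delta) \subset \Delta$: for $x \in \Delta$ one computes $(A^{-1}x)^T M (A^{-1}x) = x^T M x - |x|^2 < 1$, hence $\Delta \subset A(\Delta)$, as required. Finally, since $M$ is symmetric positive-definite, $Q = M^{1/2}$ is invertible and $\Delta = Q^{-1}(B(0,1))$, so $\Delta$ is an ellipse in the sense of the definition.

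I do not anticipate any genuine obstacle. The only analytic input is Gelfand's formula, used to guarantee convergence of the series, and the rest is algebraic bookkeeping. The sole sanity point worth flagging is that $M$ is a \emph{real} symmetric positive-definite matrix (not merely Hermitian) even when $A$ has complex eigenvalues, so $Q$ is a real invertible matrix and $\Delta$ is a bona fide ellipse in $\R^d$.
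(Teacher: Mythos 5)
Your proof is correct, and it is essentially the standard Lyapunov construction that Bownik uses to prove the cited Lemma~2.2: one builds the convergent series $M=\sum_{k\geq 0}(A^{-k})^{T}A^{-k}$, observes the index-shift identity $(A^{-1})^{T}MA^{-1}=M-I$, and takes $\Delta$ to be the open unit ball of the inner product $\langle x,y\rangle_{M}=x^{T}My$. The present paper does not reprove the proposition but only cites Bownik, so there is no internal argument to compare against; your computation that $(A^{-1}x)^{T}M(A^{-1}x)=x^{T}Mx-|x|^{2}<1$ for $x\in\Delta$ is airtight, the series converges by Gelfand's formula since $\rho(A^{-1})<1$, and $M$ is real symmetric positive-definite (with $M\succeq I$, so $Q=M^{1/2}$ is invertible) exactly as you flagged.
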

This proposition is similar to the fact that $B(0,r)\subset B(0,R)$ whenever $r<R$. By rescaling, we may assume that $|\Delta|\approx1$. Although $\Delta$ is not unique, we will choose a specific one and refer it as \textit{the} ellipse associated with $A$.

The \textit{family of balls $\{\Delta_k\}_{k\in\Z}$ associated with $A$ around the origin} is
\begin{equation*}
	\Delta_{k}\coloneqq 
	A^k(\Delta),\text{ where }k\in\Z.
\end{equation*}
By Proposition~\ref{Prop: nested balls under anisotropic settings}, we have $\Delta_k\subset\Delta_{k+1}$, and $|\Delta_k|=b^k|\Delta_0|\approx b^k$.

Although all quasi-norms $\rho_A$ are equivalent (see \cite{Bownik_book}, Chapter~1, Lemma~2.4), we will use the following specific one: From the nested property of the family $\{\Delta_k\}_{k\in\Z}$, for every $x\in\R^d\setminus\{0\}$, there exists unique $k\in\Z$ so that $x\in\Delta_{k+1}\setminus\Delta_{k}$. We define
\begin{equation*}
	\rho_A(x)
	\coloneqq
	\begin{cases}
		b^k &\quad\text{if }x\in\Delta_{k+1}\setminus\Delta_{k},\\
		0 &\quad\text{otherwise}.
	\end{cases}
\end{equation*}

Let $A^\ast$ be the adjoint of $A$, since $A^\ast$ and $A$ have the same spectrum, the matrix $A^\ast$ is also a dilation matrix. We can define the quasi-norm $\rho_{A^\ast}$, the nested balls $\{\Delta^\ast_{k}\}_{k\in\Z}$, and so on.
\subsection{Anisotropic $H^1$, and the atomic decomposition}
Let $\varphi\in\mathcal{S}(\R^d)$ with $\int\varphi\neq0$. The anisotropic dilation of $\varphi$ is defined by $\varphi_k(\cdot)\coloneqq b^k\varphi(A^k\cdot)$. For $f\in L^1(\R^d)$, the smooth maximal function of $f$ with respect to $\varphi$ is
\begin{equation*}
	\mathcal{M}_{\varphi}(f)(x)
	\coloneqq
	\sup_{k\in\Z}|f\ast\varphi_k|(x).
\end{equation*}
The \textit{anisotropic} $H^1$ \textit{space associated with the dilation} $A$ is
\begin{equation*}
	H^1_{A}(\R^d)\coloneqq
	\Big\{f\in L^1(\R^d):\mathcal{M}_{\varphi}(f)\in L^1(\R^d)\Big\}
\end{equation*}

Similar to the isotropic case, the space $	H^1_{A}$ can be characterized by the atomic decomposition. We say that an $L^1$ function $a$ is an $H^1_A$-\textit{atom}, if
\begin{enumerate}
	\item There exist $x_0\in\R^d$ and $k\in\Z$, so that $\supp(a)\subset x_0+\Delta_k$.
	\item The function $a$ is bounded by $|\Delta_k|^{-1}$ almost everywhere: $\|a\|_\infty\leq|\Delta_k|^{-1}$.
	\item The function $a$ has vanishing moment: $\int a=0$.
\end{enumerate}
The \textit{atomic decomposition} of $H^1_A$ states that $f\in H^1_A$ if and only if there exist atoms $\{a_i\}_{i=1}^\infty$ and a sequence $(\lambda_i)_{i=1}^\infty\in\ell^1$, so that $	f=\sum_{i=1}^\infty\lambda_ia_i$. Moreover,
\begin{equation*}
	\begin{aligned}
		\|f\|_{H^1_A}\approx
		\inf\Big\{\sum_{i=1}^\infty|\lambda_i|:f=\sum_{i=1}^\infty\lambda_ia_i\Big\},
	\end{aligned}
\end{equation*}
where the infimum is taken over all possible atomic decomposition of $f$. For this characterization, see \cite{Bownik--Wang}, Chapter 1, Theorem 6.5.

\subsection{Duality, and anisotropic $\mathrm{BMO}$}
We denote $\mathcal{B}$ the family of translated balls associated to $A$,
\begin{equation*}
	\mathcal{B}
	\coloneqq
	\{x+\Delta_k:x\in\R^d,\text{ and }k\in\Z\}.
\end{equation*}
Assume that $1\leq q<\infty$. A locally integrable function $f$ is said to be in the  \textit{anisotropic bounded mean oscillation space} $\mathrm{BMO}_{A,q}$, if the following inequality holds:
	\begin{equation*}
		\begin{aligned}
			\sup_{B\in\mathcal{B}}\inf_{c_B\in\C}\bigg(\frac{1}{|B|}\int_{B}|f(x)-c_B|^q\,\mathrm{d}x\bigg)^{\frac{1}{q}}<\infty.
		\end{aligned}
	\end{equation*}
This quantity also defines the $\mathrm{BMO}_{A,q}$ norm. It is well-known that $\mathrm{BMO}_{A,q}$ norms are equivalent for $1\leq q<\infty$, so we will use the abbreviation $\mathrm{BMO}_{A}$ and omit $q$. Also, as in the isotropic case, the following duality relation holds:
\begin{equation*}
	(H^1_{A})^\ast
	=
	\mathrm{BMO}_A.
\end{equation*}
For basic properties of the anisotropic $\mathrm{BMO}$, see \cite{Bownik_book}, Chapter 1, Section 8.
\subsection{Use of notations and abbreviations}
For the later part of this article, we will drop the matrix $A$ and $A^\ast$ in the notations,  which does not create ambiguity. For example, we use $\rho,\rho_\ast$ to denote $\rho_{A},\rho_{A^\ast}$ respectively. Unless otherwise specified, the spaces $H^1$ and $\mathrm{BMO}$ are always interpreted as $H^1_A$ and $\mathrm{BMO}_A$.

\section{Main theorem and its proof}
To state the main theorem, we need to introduce few more notations. For $\Delta\subset\R^d$ the ellipse associated with $A$, we define $R=R(\Delta)$ be the \textit{smallest rectangle contains $\Delta$}. Here, for rectangles, we mean that the sides of $R$ are parallel to the coordinate axes. By simple convex geometry, we know that $|\Delta|\approx|R|$, and there exists $M=M_{\Delta}\in\N$, so that
\begin{equation*}
	A^{-M}(R)\subset\Delta\subset R.
\end{equation*}

For $\Delta^\ast$, similar conclusion holds. We denote the smallest rectangle containing $\Delta^\ast$ by $R^\ast$, and assume that $(A^\ast)^{-N}(R^\ast)
\subset
\Delta^\ast
\subset
 R^\ast$, where $N\in \N$ depends on the ellipse $\Delta^\ast$, and the matrix $A^\ast$.

Suppose that $R=I_1\times I_2\cdots\times I_d$. For $\alpha\in\Z^d$, We denote $R(\alpha)$ be the translation
\begin{equation*}
	\begin{aligned}
		R(\alpha)
		\coloneqq
		\Big\{(x_i)_{i=1}^d:
		|I_i|\alpha_i-\frac{|I_i|}{2}
		\leq 
		x_i
		<
		|I_i|\alpha_i+\frac{|I_i|}{2}
		\Big\},
	\end{aligned}
\end{equation*}
and $\Delta(\alpha)\subset R(\alpha)$ be the corresponding translation of $\Delta$.
	
\begin{thm}
	Let $\mu$ be a positive Borel measure on $\R^d\setminus\{0\}$, then
	\begin{equation}\label{Condition: the H^1 to F(L^1) inequality}
		\int_{\R^d}|\widehat{f}|\,\mathrm{d}\mu
		\lesssim
		\|f\|_{H^1},
	\end{equation}
if and only if
\begin{equation}\label{Condition: control of the ell^2 sum over rectangels}
\sup_{k\in\Z}\left(\sum_{\alpha\in\Z^d\setminus\{0\}}\mu\left(\left(A^\ast\right)^k\left(R^\ast\left(\alpha\right)\right)^2\right)\right)^{\frac{1}{2}}
\lesssim
1.
\end{equation}
\end{thm}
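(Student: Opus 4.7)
The plan is to establish both directions by combining a tile decomposition in frequency with duality, each direction handled by a complementary technique.

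For the sufficiency direction, I would first reduce via the atomic decomposition of $H^1_A$ to showing $\int|\widehat a|\,d\mu\lesssim 1$ for every $H^1_A$-atom $a$. Since $|\widehat{a(x_0+\cdot)}|=|\widehat a|$, any translation of the support drops out; and the anisotropic dilation $a\mapsto b^ka(A^k\cdot)$ rescales the problem to a scale-$0$ atom while pushing $\mu$ forward under $(A^\ast)^{-k}$, which preserves condition \eqref{Condition: control of the ell^2 sum over rectangels} thanks to the supremum over $k$. Hence I may assume $a$ is supported on $\Delta_0$ with $\|a\|_\infty\lesssim 1$ and $\int a=0$. I would then partition $\R^d\setminus\{0\}$ by the lattice $\{R^\ast(\alpha)\}_{\alpha\in\Z^d\setminus\{0\}}$ (handling the $\alpha=0$ tile separately via its $O(1)$ measure) and apply Cauchy--Schwarz:
\[
\int|\widehat a|\,d\mu \le \Big(\sum_\alpha\sup_{R^\ast(\alpha)}|\widehat a|^2\Big)^{1/2}\Big(\sum_\alpha\mu(R^\ast(\alpha))^2\Big)^{1/2}.
\]
The second factor is $\lesssim 1$ by \eqref{Condition: control of the ell^2 sum over rectangels} at $k=0$; the first should reduce, via a Plancherel--P\'olya-type sampling inequality for the entire function $\widehat a$, to $\|\widehat a\|_2^2=\|a\|_2^2\lesssim 1$.

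I expect the \textbf{main technical obstacle} to be this sampling step: $R$ is associated with $A$ while $R^\ast$ is associated with $A^\ast$, so the lattice $\{R^\ast(\alpha)\}$ need not be the exact Fourier dual of $R$, and the Parseval-style identification used in the isotropic Sledd--Stegenga argument requires adjustment. If the direct sampling bound is not clean, I would fall back on a dyadic decomposition into anisotropic shells $S_j=\Delta^\ast_{j+1}\setminus\Delta^\ast_j$: shells with $j<0$ are controlled by the vanishing moment (combined with Proposition~\ref{Prop: comparison on the growth of the quasi norm and Euclidean norm} giving $|\widehat a(\xi)|\lesssim\rho_\ast(\xi)^{\zeta_-}$), and shells with $j\ge 0$ are treated by Cauchy--Schwarz at the scale-adapted level $k=j$, exploiting the $L^2$-tail $\sum_{j\ge 0}\int_{S_j}|\widehat a|^2<\infty$.

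For the necessity direction, I would dualize via Fefferman's theorem $(H^1_A)^\ast=\mathrm{BMO}_A$: condition \eqref{Condition: the H^1 to F(L^1) inequality} is equivalent to $g_h(x):=\int e^{2\pi ix\cdot\xi}h(\xi)\,d\mu(\xi)$ lying in $\mathrm{BMO}_A$ uniformly for $|h|\le 1$. For each fixed $k\in\Z$, I would take $h=\sum_{\alpha\neq 0}\epsilon_\alpha\mathbf{1}_{(A^\ast)^kR^\ast(\alpha)}$ with random signs $\epsilon_\alpha=\pm 1$. Testing the BMO oscillation on $B=\Delta_{-k}$ (the dual scale), expanding $\frac{1}{|B|}\int_B|g_h-c|^2\,dx$ for a suitable constant $c$, and taking expectation over the signs collapses all off-diagonal terms by $\mathbb{E}[\epsilon_\alpha\epsilon_\beta]=\delta_{\alpha\beta}$, leaving $\sum_{\alpha\neq 0}\mu((A^\ast)^kR^\ast(\alpha))^2$ multiplied by a kernel $\frac{1}{|B|}\int_Be^{2\pi ix(\xi-\eta)}\,dx$ which is $\gtrsim 1$ for $\xi,\eta$ in a common tile by the scale duality of $B$ with $(A^\ast)^kR^\ast$. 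Since at least one sign realization attains the expectation and the BMO bound caps the oscillation, \eqref{Condition: control of the ell^2 sum over rectangels} emerges.
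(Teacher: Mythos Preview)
Your sufficiency plan matches the paper on the region $\bigcup_{\alpha\neq 0}R^\ast(\alpha)$: the Cauchy--Schwarz splitting and the Plancherel--P\'olya control of $\sum_\alpha\sup_{R^\ast(\alpha)}|\widehat a|^2$ are exactly what the paper proves via Proposition~\ref{Prop: control of the ell^2 norm over the translation of cubes}. But your handling of the $\alpha=0$ tile is a genuine error: $\mu(R^\ast)$ need \emph{not} be $O(1)$. For the Hardy--Littlewood measure $d\mu=\rho_\ast(\xi)^{-1}\,d\xi$, condition~\eqref{Condition: control of the ell^2 sum over rectangels} holds yet $\mu(R^\ast)=\infty$. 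What saves the integral over $R^\ast$ is precisely the vanishing-moment decay $|\widehat a(\xi)|\lesssim\rho_\ast(\xi)^{\zeta_-}$ you placed in the fallback, combined with the annulus bound $\mu\big((A^\ast)^{l}R^\ast\setminus(A^\ast)^{l-N}R^\ast\big)\lesssim 1$ (itself extracted from \eqref{Condition: control of the ell^2 sum over rectangels} at the finer level), giving a convergent geometric series. So the low-frequency half of your fallback is not an alternative to the main argument---it is a required complement, and the paper explicitly flags this as a point Sledd--Stegenga glossed over. Conversely, the high-frequency half of your fallback does not work as a substitute for the sampling step: Cauchy--Schwarz shell by shell at level $k=j$ leaves you summing quantities of order at best $(\int_{S_j}|\widehat a|^2\,d\xi)^{1/2}$, and $\sum_{j\ge 0}(\int_{S_j}|\widehat a|^2)^{1/2}$ is not controlled by $\|a\|_2$. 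The single-scale estimate~\eqref{Basic inequality: ell^2 sum of atoms over small rectangles} cannot be bypassed this way.

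For necessity you take a genuinely different route. The paper does not test BMO with random signs; it constructs, for each $k$, explicit $H^1$ functions $f=g\cdot\widehat{\chi_{k,\lambda}}$ where $\chi_{k,\lambda}$ is a Bochner--Riesz cutoff supported in $\Delta^\ast_k$, proves $\|f\|_{H^1}\lesssim b^{k/2}\|g\|_2$ by direct BMO pairing (Lemma~\ref{Lemma: A special construction of H^1 function under the L^2 case}), and then runs $L^2$ duality in $g$ to obtain $\int_{(\Delta^\ast_k)^c}\mu\big(y+c\,\Delta^\ast_k\big)^2\,dy\lesssim b^k$, from which the tile sum follows by elementary covering. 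Your random-sign scheme is attractive, but the claim that the kernel $\frac{1}{|B|}\int_B e^{2\pi ix\cdot(\xi-\eta)}\,dx$ is $\gtrsim 1$ for $\xi,\eta$ in a common tile is unjustified: already in the isotropic one-dimensional case with $B=\Delta_{-k}$ this sinc kernel tends to $0$ as $\xi-\eta$ approaches the edge of the tile-difference set, and after subtracting the constant $c$ the relevant quadratic form has kernel $K(\xi-\eta)-K(\xi)\overline{K(\eta)}$, for which no uniform pointwise lower bound over $\xi,\eta\in T_\alpha$ is available. Shrinking $B$ repairs the diagonal but then destroys the oscillation bound $|K(\xi)|\le 1-c$ for tiles with small $|\alpha|$. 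The paper's smooth-cutoff approach sidesteps this trade-off entirely.
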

\subsection{Proof of the sufficient part}
Assume that \eqref{Condition: control of the ell^2 sum over rectangels} holds. It suffices to prove the result for $H^1$-atoms $a$. Furthermore, since we consider the magnitude of the Fourier transform, by the identity $\mathcal{F}(a(x-x_0))(\xi)=\mathrm{e}^{-2\pi\mathrm{i}x_0\cdot\xi}\widehat{a}(\xi)$, we may assume that $a$ is supported in $\Delta_k$. We decompose $\int|\widehat{a}|\,\mathrm{d}\mu$ into two parts: the integral over the region $(A^\ast)^{-(k+N)}(R^\ast)$, and over its complement.

Let us begin with the integral on $(A^\ast)^{-(k+N)}(R^\ast)$. It is easily seen that
\begin{equation*}
	\begin{aligned}
		\int_{(A^\ast)^{-(k+N)}(R^\ast)}|\widehat{a}(\xi)|\,\mathrm{d}\mu
		\lesssim
		\sum_{l=-\infty}^{-(k+N)}\int_{(A^\ast)^{l}(R^\ast)\setminus(A^\ast)^{l-N}(R^\ast)}|\widehat{a}(\xi)|\,\mathrm{d}\mu.\\
	\end{aligned}
\end{equation*}
As a consequence, to control $	\int_{(A^\ast)^{-(k+N)}(R^\ast)}|\widehat{a}(\xi)|\,\mathrm{d}\mu$, we only need to obtain good estimates of $\int_{(A^\ast)^{l}(R^\ast)\setminus(A^\ast)^{l-N}(R^\ast)}|\widehat{a}|\,\mathrm{d}\mu$, for each $l\leq-(k+N)$. 

The following finite covering property is straightforward to verify.
\begin{prop}\label{Prop: covering large rectangle by finitely many small ones}
	The region $(A^\ast)^{l}(R^\ast)\setminus(A^\ast)^{l-N}(R^\ast)$ can be covered by $\mathcal{O}_N(1)$ translations of $(A^\ast)^{l-N}(R^\ast(\alpha))$ with $\alpha\neq0$.
\end{prop}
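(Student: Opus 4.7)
The plan is to verify this covering fact by a direct bookkeeping argument that exploits the fact that the translates $\{(A^\ast)^{l-N}(R^\ast(\alpha))\}_{\alpha\in\Z^d}$ form a partition of $\R^d$.

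First, observe that by construction the half-open rectangles $\{R^\ast(\alpha)\}_{\alpha\in\Z^d}$ partition $\R^d$; since $(A^\ast)^{l-N}$ is a linear isomorphism, so do their images. Each such tile has measure $b^{l-N}|R^\ast|$, and the tile corresponding to $\alpha=0$ is exactly $(A^\ast)^{l-N}(R^\ast)$, the region being excised. Consequently, the annulus $(A^\ast)^{l}(R^\ast)\setminus(A^\ast)^{l-N}(R^\ast)$ is automatically covered by those tiles with $\alpha\neq 0$ that meet $(A^\ast)^{l}(R^\ast)$, and it suffices to bound the cardinality of this sub-collection by a constant depending only on $N$.

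Second, applying the linear isomorphism $(A^\ast)^{-(l-N)}$ to the intersection condition eliminates the $l$-dependence: a tile $(A^\ast)^{l-N}(R^\ast(\alpha))$ meets $(A^\ast)^{l}(R^\ast)=(A^\ast)^{l-N}\big((A^\ast)^N(R^\ast)\big)$ if and only if $R^\ast(\alpha)\cap(A^\ast)^{N}(R^\ast)\neq\emptyset$. Thus the count in question reduces to
$$\#\{\alpha\in\Z^d\setminus\{0\}:R^\ast(\alpha)\cap(A^\ast)^{N}(R^\ast)\neq\emptyset\},$$
which is manifestly independent of $l$ and depends only on the fixed data $A^\ast$, $R^\ast$ and the integer $N$.

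Finally, I would close by a volume comparison: any $R^\ast(\alpha)$ meeting $(A^\ast)^{N}(R^\ast)$ lies entirely in the $\mathrm{diam}(R^\ast)$-neighborhood of the bounded set $(A^\ast)^{N}(R^\ast)$, whose total volume is some constant $C_N$ depending only on $N$, $A^\ast$, $R^\ast$. Since the $R^\ast(\alpha)$ are pairwise disjoint with common volume $|R^\ast|$, their number is at most $C_N/|R^\ast|=\mathcal{O}_N(1)$, which completes the argument. There is no genuine obstacle here; the only point worth stressing is the first rescaling step, which turns an apparently $l$-dependent count into a scale-free one, after which the conclusion is a routine volume estimate.
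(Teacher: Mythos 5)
Your proof is correct, and it takes essentially the same route as the paper's, with one useful improvement. The paper's treatment of this proposition is purely pictorial: it exhibits a two-dimensional figure showing $26$ translates of $(A^\ast)^{-N}(R^\ast)$ covering $R^\ast\setminus(A^\ast)^{-N}(R^\ast)$, and then remarks that the general $l$ follows by applying $(A^\ast)^l$. Your argument makes the same scale-invariance observation (your conjugation by $(A^\ast)^{-(l-N)}$ is exactly the ``apply $(A^\ast)^l$'' step, run in reverse), but replaces the picture by a genuine count: you note that the tiles $(A^\ast)^{l-N}(R^\ast(\alpha))$ partition $\R^d$ with the $\alpha=0$ tile being precisely the excised set, reduce the cardinality question to $\#\{\alpha\neq 0 : R^\ast(\alpha)\cap (A^\ast)^N(R^\ast)\neq\emptyset\}$, and bound this via a volume-packing argument. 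This has two advantages over the paper's exposition: it works verbatim in every dimension $d$ (the paper's remark ``the general case follows immediately'' is about varying $l$, not $d$, and the figure is strictly $2$-dimensional), and it makes explicit that the bound depends only on $N$, $A^\ast$ and $R^\ast$, not on $l$. One small point of emphasis worth adding: the reason the covering tiles automatically have $\alpha\neq 0$, as the proposition requires, is that the $\alpha=0$ tile equals $(A^\ast)^{l-N}(R^\ast)$ up to boundary, which is exactly the set subtracted off; you state this, and it is the crux of why the proposition's $\alpha\neq(0,0)$ restriction is harmless.
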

This figure explains the proposition for $R^\ast\setminus(A^\ast)^{-N}(R^\ast)$. The rectangle drawn with bold lines is $R^\ast$. The small parallelogram centered at the origin is $(A^\ast)^{-N}(R^\ast)$, and 26 translations of it cover the region $R^\ast\setminus(A^\ast)^{-N}(R^\ast)$. The general case follows immediately by applying the mapping $(A^\ast)^{l}$.
	\begin{figure}[H]
	\centering 
	\includegraphics[width=0.4\textwidth]{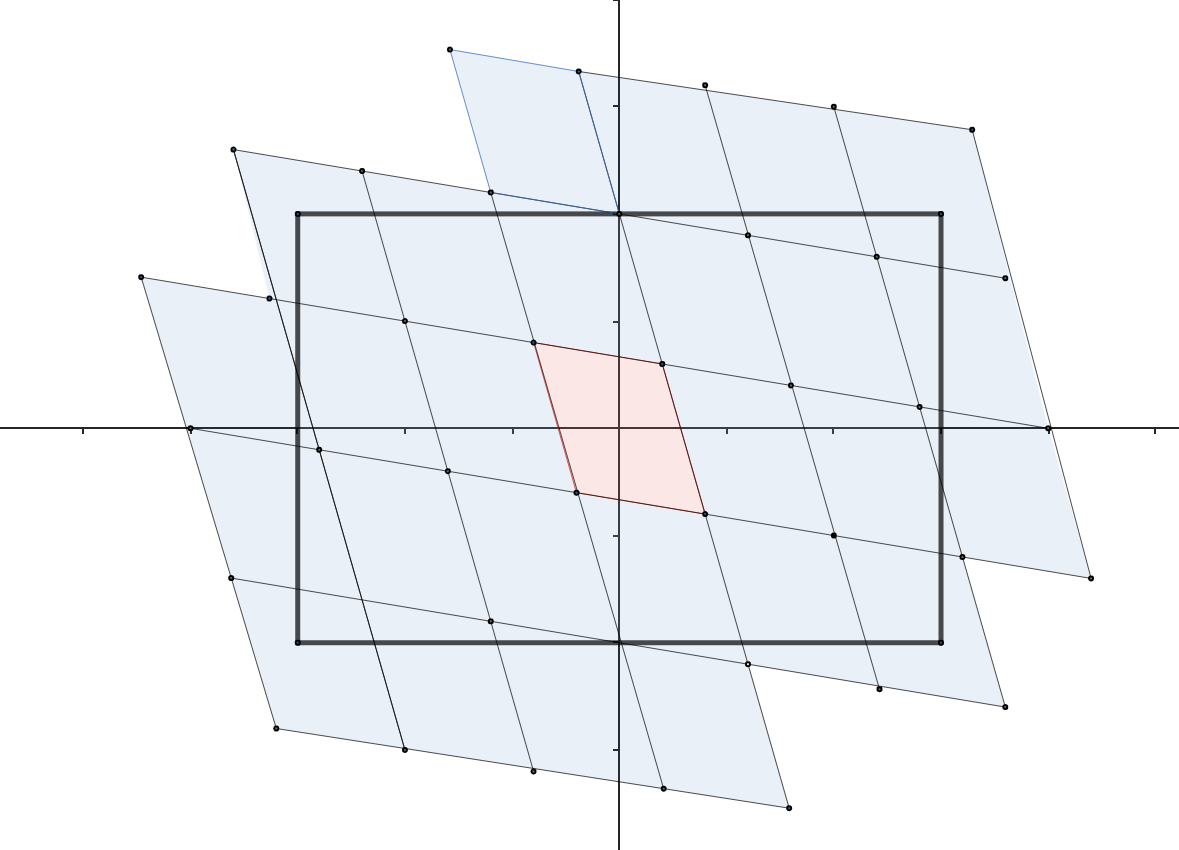} 
	\label{covering of rectangles} 
	\caption{}
\end{figure}
Using Proposition~\ref{Prop: covering large rectangle by finitely many small ones} and the Cauchy--Schwarz, it follows that
\begin{equation*}
	\begin{aligned}
		\mu\Big((A^\ast)^{l}(R^\ast)\setminus(A^\ast)^{l-N}(R^\ast)\Big)
		&\leq
		\sum_{\alpha:\,\mathcal{O}_N(1)\textrm{ terms}}
		\mu\Big((A^\ast)^{l-N}(R^\ast(\alpha))\Big)\\
		&\lesssim
		\mathcal{O}_N(1)
		\bigg(\sum_{\alpha:\,\mathcal{O}_N(1)\textrm{ terms}}
		\mu\bigg((A^\ast)^{l-N}(R^\ast(\alpha))\bigg)^2\bigg)^{\frac{1}{2}}\\
		&\lesssim
		\mathcal{O}_N(1)
		\bigg(\sum_{\alpha\neq0}
		\mu\bigg((A^\ast)^{l-N}(R^\ast(\alpha))\bigg)^2\bigg)^{\frac{1}{2}}\\
		&\lesssim_N
		1,
	\end{aligned}
\end{equation*}
where we used the hypothesis \eqref{Condition: control of the ell^2 sum over rectangels} in the last inequality.

Since $\Delta^\ast\subset R^\ast\subset (A^\ast)^{N}(\Delta^\ast)$, applying the mapping $(A^\ast)^{l-N}$ leads to
\begin{equation*}
	(A^\ast)^{l-N}(R^\ast)\subset
	(A^\ast)^{l}(\Delta^\ast)\subset
	(A^\ast)^{l}(R^\ast)\subset
	(A^\ast)^{l+N}(\Delta^\ast),
\end{equation*}
and it follows trivially that $(A^\ast)^{l}(R^\ast)\setminus(A^\ast)^{l-N}(R^\ast)
	\subset
	(A^\ast)^{l+N}(\Delta^\ast)$.
	
The analysis above shows that, if $l\leq-(k+N)$, then
\begin{equation*}
	\rho_\ast(\xi)\leq b^{l+N}\leq b^{-k},\text{ whenever }\xi\in(A^\ast)^{l}(R^\ast)\setminus(A^\ast)^{l-N}(R^\ast).
\end{equation*}
We will use the following Fourier transform estimate of $H^1$-atoms, which was established by Bownik--Wang:
\begin{prop}\label{Fourier decay of atoms}
	(See \cite{Bownik--Wang}, Lemma~5) Suppose that $a$ is an $H^1$-atom supported in $\Delta_{k}$, then
	\begin{equation*}
		|\widehat{a}(\xi)|\lesssim
		b^{k\zeta_{-}}\rho_\ast(\xi)^{\zeta_{-}},\text{ whenever }\rho_\ast(\xi)\leq b^{-k}.
	\end{equation*}
\end{prop}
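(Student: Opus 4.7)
The plan is to reduce the estimate to a universal smoothness bound for a ``normalized'' atom supported in the fixed ellipse $\Delta$, and then transfer the bound from Euclidean to anisotropic scales via Proposition \ref{Prop: comparison on the growth of the quasi norm and Euclidean norm}.

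First I would rescale the atom: define $b_k(x)\coloneqq b^k a(A^k x)$. Because $|\Delta_k|\approx b^k$, the atomic size condition $\|a\|_\infty\leq|\Delta_k|^{-1}$ implies $\|b_k\|_\infty\lesssim 1$ uniformly in $k$; moreover $b_k$ is supported in the fixed set $\Delta$ (of $\mathcal{O}(1)$ Euclidean diameter) and inherits the cancellation $\int b_k=0$. A direct change of variables $y=A^k x$ gives the intertwining identity
\begin{equation*}
\widehat{a}(\xi)=\widehat{b_k}\bigl((A^\ast)^k\xi\bigr).
\end{equation*}
Next, using the vanishing moment,
\begin{equation*}
\widehat{b_k}(\eta)=\int_{\Delta}b_k(x)\bigl(e^{-2\pi i x\cdot\eta}-1\bigr)\,dx,
\end{equation*}
so the elementary bound $|e^{-2\pi i x\cdot\eta}-1|\leq 2\pi|x||\eta|$ combined with $\|b_k\|_\infty\lesssim 1$ and the Euclidean boundedness of $\Delta$ yields $|\widehat{b_k}(\eta)|\lesssim|\eta|$, uniformly in $k$.

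Finally I would pass back to the anisotropic scale. Setting $\eta=(A^\ast)^k\xi$ and using property~(2) of the quasi-norm, $\rho_\ast(\eta)=b^k\rho_\ast(\xi)$, so the hypothesis $\rho_\ast(\xi)\leq b^{-k}$ becomes exactly $\rho_\ast(\eta)\leq 1$. In this regime Proposition~\ref{Prop: comparison on the growth of the quasi norm and Euclidean norm} provides $|\eta|\lesssim\rho_\ast(\eta)^{\zeta_-}$, and chaining the inequalities gives
\begin{equation*}
|\widehat{a}(\xi)|=|\widehat{b_k}(\eta)|\lesssim|\eta|\lesssim\rho_\ast(\eta)^{\zeta_-}=b^{k\zeta_-}\rho_\ast(\xi)^{\zeta_-},
\end{equation*}
which is the asserted decay.

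The main subtlety is bridging the two metric structures: the vanishing-moment/Taylor step is intrinsically a \emph{Euclidean} estimate, so one can only convert cleanly by first rescaling to $\Delta$ (so that the factor $|x|$ is of order $1$) and then invoking the $\zeta_-$ branch of the quasi-norm comparison, which is precisely the branch valid in the small-$\rho_\ast$ regime forced by the hypothesis $\rho_\ast(\xi)\leq b^{-k}$. If one attempted the same argument outside this range the $\zeta_+$ exponent would appear instead, giving a strictly weaker and non-uniform bound, which is why the decay estimate is only stated for $\rho_\ast(\xi)\leq b^{-k}$.
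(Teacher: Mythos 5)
Your proof is correct and complete. The paper itself does not prove this proposition; it is cited verbatim from \cite{Bownik--Wang}, Lemma~5, so there is no internal argument to compare against. That said, your argument is exactly the standard one for Fourier decay of anisotropic atoms: rescale to the normalized atom $b_k(x)=b^k a(A^k x)$ (uniformly bounded, supported in the fixed ellipse $\Delta$, and still of mean zero), verify the intertwining $\widehat{a}(\xi)=\widehat{b_k}((A^\ast)^k\xi)$, use the vanishing moment plus $|e^{-2\pi i x\cdot\eta}-1|\leq 2\pi|x||\eta|$ to get the Euclidean bound $|\widehat{b_k}(\eta)|\lesssim|\eta|$, and finally convert via the $\zeta_-$ branch of Proposition~\ref{Prop: comparison on the growth of the quasi norm and Euclidean norm}, which applies precisely because the hypothesis $\rho_\ast(\xi)\leq b^{-k}$ rescales to $\rho_\ast(\eta)\leq1$. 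Your closing remark about why the $\zeta_-$ branch (rather than $\zeta_+$) is the relevant one, and hence why the restriction $\rho_\ast(\xi)\leq b^{-k}$ is essential, correctly identifies the one place where the anisotropic structure genuinely enters. No gaps.
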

Combining previous estimates,
\begin{equation*}
	\begin{aligned}
		1
		&\gtrsim
		\mu\Big((A^\ast)^{l}(R^\ast)\setminus(A^\ast)^{l-N}(R^\ast)\Big)\\
		&=
		\int_{(A^\ast)^{l}(R^\ast)\setminus(A^\ast)^{l-N}(R^\ast)}\frac{1}{\rho_{\ast}^{\zeta_{-}}(\xi)}\rho_{\ast}^{\zeta_{-}}(\xi)\,\mathrm{d}\mu\\
		&\gtrsim
		\frac{1}{b^{(l+N)\zeta_{-}}}\int_{(A^\ast)^{l}(R^\ast)\setminus(A^\ast)^{l-N}(R^\ast)}b^{-k\zeta_{-}}|\widehat{a}(\xi)|\,\mathrm{d}\mu\\
		&=
		\frac{1}{b^{(l+k+N)\zeta_{-}}}\int_{(A^\ast)^{l}(R^\ast)\setminus(A^\ast)^{l-N}(R^\ast)}|\widehat{a}(\xi)|\,\mathrm{d}\mu.\\
	\end{aligned}
\end{equation*}
Summing over $l\leq-(k+N)$ shows that
\begin{equation*}
	\begin{aligned}
		\int_{(A^\ast)^{-(k+N)}(R^\ast)}|\widehat{a}(\xi)|\,\mathrm{d}\mu
		&\lesssim
		\sum_{l=-\infty}^{-(k+N)}\int_{(A^\ast)^{l}(R^\ast)\setminus(A^\ast)^{l-N}(R^\ast)}|\widehat{a}(\xi)|\,\mathrm{d}\mu\\
		&\lesssim
		\sum_{l=-\infty}^{-(k+N)}b^{(l+k+N)\zeta_{-}}\\
		&\lesssim
		1.
	\end{aligned}
\end{equation*}
The implicit constants are independent of $k$.
\begin{rmk}
	In the original paper \cite{Sledd--Stegenga}, the estimate over $(A^\ast)^{-(k+N)}(R^\ast)$ is omitted, and the authors said it was an easy consequence of \eqref{Condition: control of the ell^2 sum over rectangels}. This is not quite accurate. In \cite{Sledd--Stegenga}, the authors write the summation \eqref{Condition: control of the ell^2 sum over rectangels} over all $\alpha\in\Z^d$, which trivially implies the estimate over $(A^\ast)^{-(k+N)}(R^\ast)$. However, the correct condition is the sum over $\alpha\in\Z^d\setminus\{0\}$.
\end{rmk}

We now turn to control the integral on the complement  $\R^d\setminus(A^\ast)^{-(k+N)}(R^\ast)$, it is easy to check that $
	\R^d\setminus R^\ast=\bigcup_{\alpha\in\Z^d\setminus\{0\}}R^\ast(\alpha)
$, applying the invertible mapping $(A^\ast)^{-(k+N)}$ gives
\begin{equation*}
	\R^d\setminus(A^\ast)^{-(k+N)}(R^\ast)=\bigcup_{\alpha\in\Z^d\setminus\{0\}}(A^\ast)^{-(k+N)}(R^\ast(\alpha)),
\end{equation*}
and the union is a disjoint union. Therefore,
\begin{equation*}
	\begin{aligned}
	\int_{\R^d\setminus(A^\ast)^{-(k+N)}(R^\ast)}|\widehat{a}|\,\mathrm{d}\mu
	&=
	\sum_{\alpha\in\Z^d\setminus\{0\}}\int_{(A^\ast)^{-(k+N)}(R^\ast(\alpha))}|\widehat{a}|\,\mathrm{d}\mu\\
	&\leq
	\sum_{\alpha\in\Z^d\setminus\{0\}}\mu\big((A^\ast)^{-(k+N)}(R^\ast(\alpha))\big)\sup_{(A^\ast)^{-(k+N)}(R^\ast(\alpha))}|\widehat{a}|.\\
	\end{aligned}
\end{equation*}
By our hypothesis \eqref{Condition: control of the ell^2 sum over rectangels},
\begin{equation*}
	\sum_{\alpha\in\Z^d\setminus\{0\}}\mu\Big((A^\ast)^{-(k+N)}(R^\ast(\alpha))\Big)^2
	\lesssim
	1.
\end{equation*}
By Cauchy--Schwarz, the proof will be completed if we can show that
\begin{equation}\label{Basic inequality: ell^2 sum of atoms over small rectangles}
	\sum_{\alpha\in\Z^d\setminus\{0\}}
	\sup_{(A^\ast)^{-(k+N)}(R^\ast(\alpha))}|\widehat{a}|^2
	\lesssim
	1.
\end{equation}

After a change of variable $\eta=(A^\ast)^{k+N}\xi$, we have
\begin{equation*}
	\sup_{\xi\in(A^\ast)^{-(k+N)}(R^\ast(\alpha))}|\widehat{a}(\xi)|^2
	=
	\sup_{\eta\in R^\ast(\alpha)}\Big|\widehat{a}\Big((A^\ast)^{-(k+N)}\eta\Big)\Big|^2.
\end{equation*}
The basic properties of the Fourier transform implies that
\begin{equation*}
	\widehat{a}\Big((A^\ast)^{-(k+N)}\eta\Big)
	=
	b^{k+N}\mathcal{F}\big(a\circ A^{k+N}\big)(\eta).
\end{equation*}
We are reduced to control the sum
\begin{equation*}
	\begin{aligned}
	\sum_{\alpha\in\Z^d\setminus\{0\}}
	\sup_{(A^\ast)^{-(k+N)}(R^\ast(\alpha))}|\widehat{a}|^2
	=
	b^{2(k+N)}\sum_{\alpha\in\Z^d\setminus\{0\}}
	\sup_{\eta\in R^\ast(\alpha)}\Big|\mathcal{F}\big(a\circ A^{k+N}\big)(\eta)\Big|^2
	\end{aligned}
\end{equation*}

\begin{prop}\label{Prop: control of the ell^2 norm over the translation of cubes}
	Assume that $d=2$. Suppose that $f$ is smooth, and $Q=I_1\times I_2$ is a rectangle in $\R^2$. The following inequality holds:
	\begin{equation*}
		\sum_{\alpha\in\Z^2}\sup_{Q(\alpha)}|f|^2
		\lesssim
		\frac{|I_1|}{|I_2|}\int|\partial_1f|^2
		+
		|Q|\int|\partial_2\partial_1f|^2
		+
		\frac{|I_2|}{|I_1|}\int|\partial_2f|^2
		+
		\frac{1}{|Q|}\int|f|^2,
	\end{equation*}
where the implicit constant in $\lesssim$ is independent of $Q$.
\end{prop}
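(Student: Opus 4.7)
The plan is to reduce to a single rectangle, then to the unit square by scaling, and finally prove the unit-square estimate by iterating 1D Sobolev. Since the translates $\{Q(\alpha):\alpha\in\Z^2\}$ tile $\R^2$ up to a null set, summing a pointwise (in $\alpha$) version of the claimed inequality yields the result, so it suffices to prove the single-rectangle bound for, say, $\alpha=0$. Both sides of that bound are invariant under the anisotropic rescaling $(x_1,x_2)\mapsto(|I_1|x_1,|I_2|x_2)$ (a quick change-of-variables check shows this is precisely what produces the coefficients $1/|Q|$, $|I_1|/|I_2|$, $|I_2|/|I_1|$, $|Q|$ from the corresponding unit-size quantities), so the whole proposition reduces to the Sobolev embedding
\begin{equation*}
\sup_{[-\frac12,\frac12]^2}|g|^2 \lesssim \|g\|_{L^2}^2+\|\partial_1 g\|_{L^2}^2+\|\partial_2 g\|_{L^2}^2+\|\partial_1\partial_2 g\|_{L^2}^2
\end{equation*}
on the unit square.

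For this unit-square embedding I would use the elementary 1D inequality $\sup_I|\varphi|^2\lesssim\int_I|\varphi|^2+\int_I|\varphi'|^2$ (proved by integrating the identity $\varphi(x)^2-\varphi(y)^2=2\int_y^x\varphi\varphi'$ over $y\in I$ and applying Cauchy--Schwarz) in the $x_1$ variable to obtain, for every $x_2$,
\begin{equation*}
\sup_{x_1}|g(x_1,x_2)|^2 \lesssim H(x_2) := \int|g(\cdot,x_2)|^2\,dx_1+\int|\partial_1 g(\cdot,x_2)|^2\,dx_1.
\end{equation*}
Because $H\ge 0$ is smooth, the fundamental theorem of calculus gives $\sup H\le\int H+\int|H'|$. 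The term $\int H$ contributes $\|g\|_{L^2}^2+\|\partial_1 g\|_{L^2}^2$, while differentiating under the integral yields $H'(x_2)=2\int\bigl(g\,\partial_2 g+\partial_1 g\,\partial_1\partial_2 g\bigr)\,dx_1$, and two applications of Cauchy--Schwarz followed by AM--GM absorb the cross terms into the four $L^2$ quantities on the right-hand side.

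I do not anticipate a serious obstacle; the argument is essentially two applications of 1D Sobolev. The only mild subtlety is that after the first step one cannot apply an $L^2$-type 1D Sobolev inequality to $H$ directly, because $H$ does not decompose as $|\psi|^2$ for a smooth $\psi$. This is why the second step uses the $L^1$-flavoured bound $\sup H\le\int H+\int|H'|$ (which only needs $H\ge 0$) rather than a second copy of the $L^2$-type embedding. Summing the resulting inequality in $\alpha$ then yields the proposition.
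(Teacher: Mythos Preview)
Your argument is correct and cleaner than the paper's, though both rest on the same 1D Sobolev input. The paper works directly with the sum over $\alpha$: it splits $\sup_{Q(\alpha)}|f|^2$ into a mean part $|b_{Q(\alpha)}|^2$ (handled by Cauchy--Schwarz) and an oscillation part $\sup|f-b_{Q(\alpha)}|^2$, then for the oscillation writes $f(x)-f(y)$ as a two-leg path $(x_1,x_2)\to(y_1,x_2)\to(y_1,y_2)$, applies the fundamental theorem of calculus on each leg, and on the first leg feeds $x_2\mapsto\partial_1 f(t,x_2)$ into the Sledd--Stegenga 1D inequality to produce the $\partial_1 f$ and $\partial_1\partial_2 f$ terms. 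Your route is different in organization: you first localize to a single rectangle (so the tiling is used only once, at the end), then rescale to the unit square (which makes the four coefficients $1/|Q|$, $|I_1|/|I_2|$, $|I_2|/|I_1|$, $|Q|$ emerge automatically from the Jacobian), and finally prove the mixed-derivative Sobolev embedding by iterating: 1D $H^1\hookrightarrow L^\infty$ in $x_1$, then the $L^1$-type bound $\sup H\le\int H+\int|H'|$ in $x_2$. The advantage of your approach is that the scaling argument explains the coefficients structurally, and the iteration extends to $d\ge 3$ with no new ideas (just repeat the $\sup H\le\int H+\int|H'|$ step $d-1$ times). The paper's approach is a bit more hands-on but has the minor virtue of never leaving the original coordinates, so one sees directly where each term on the right comes from. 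Your observation that the second step must be the $L^1$-flavoured inequality (since $H$ is not a perfect square) is exactly the point; it is the same issue the paper sidesteps by applying the 1D lemma to $\partial_1 f$ rather than to $H$.
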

\begin{rmk}
	This result was proved by Sledd-Stegenga for $d=1$ (see \cite{Sledd--Stegenga}, Theorem 3), which states that for any finite interval $I\subset\R$,
	\begin{equation*}
		\sum_{\alpha\in\Z}\sup_{I(\alpha)}|f|^2\leq2\bigg(\frac{1}{|I|}\int_\R|f|^2+|I|\int_\R|f^\prime|^2\bigg).
	\end{equation*}
In \cite{Sledd--Stegenga}, the authors wrote that the higher-dimensional case is somewhat complicated, so the proof was omitted. We give a proof of the two-dimensional case. For general $d\geq3$, the proof below also works, but the notations will become more annoyed.
\end{rmk}
\begin{proof}[proof of Proposition~\ref{Prop: control of the ell^2 norm over the translation of cubes}]
	Write $b_Q\coloneqq\frac{1}{|Q|}\int_{Q}f(x)\ \mathrm{d}x$, then $b_Q$ is independent of $x$, and
	\begin{equation*}
		\begin{aligned}
			\sup_{Q}|f|^2\leq2\sup_Q|f-b_Q|^2+2|b_Q|^2.
		\end{aligned}
	\end{equation*}
\textit{Control $|b_Q|^2$}: The Cauchy--Schwarz inequality implies $|b_Q|^2\leq\frac{1}{|Q|}\int_Q|f|^2$. Summing over all $\alpha\in\Z^2$, we obtain
	\begin{equation*}
		\begin{aligned}
			\sum_{\alpha\in \Z^2}|b_{Q(\alpha)}|^2
			\leq
			\sum_{\alpha\in \Z^2}\frac{1}{|Q(\alpha)|}\int_{Q(\alpha)}|f|^2
			=\frac{1}{|Q(\alpha)|}\int_{\R^2}|f|^2.
		\end{aligned}
	\end{equation*}
Here we use the fact that $|Q(\alpha)|=|Q|$ is independent of $\alpha$. 

\begin{figure}[H]
	\centering 
	\includegraphics[width=0.5\textwidth]{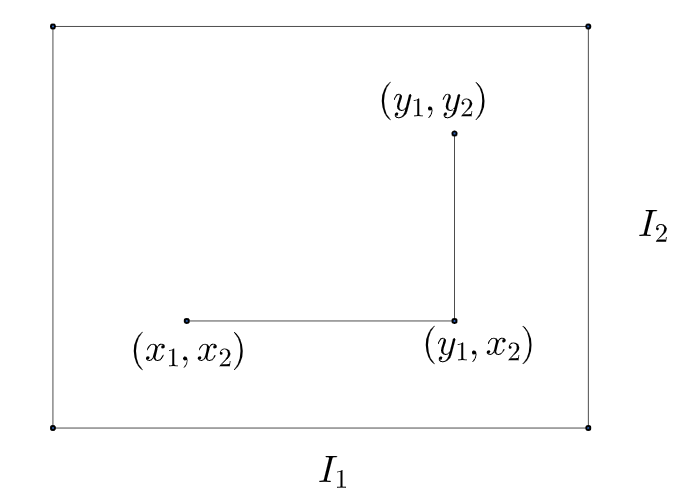} 
	\label{S-S_2} 
\end{figure}
For the term $\sup_{x\in Q}|f(x)-b_Q|^2$, we write
	\begin{equation*}
		\begin{aligned}
			|f(x)-b_Q|
			\leq
			\frac{1}{|Q|}\int_Q|f(x)-f(y)|\ \mathrm{d}y
			\leq
		\RN{1}_Q+\RN{2}_Q,
		\end{aligned}
	\end{equation*}
where we introduce the notations
	\begin{equation*}
		\begin{aligned}
			&\RN{1}_Q\coloneqq\frac{1}{|Q|}\int_{I_1}\int_{I_2}|f(x_1,x_2)-f(y_1,x_2)|\ \mathrm{d}y_2\mathrm{d}y_1,\\
			&\RN{2}_Q\coloneqq\frac{1}{|Q|}\int_{I_1}\int_{I_2}|f(y_1,x_2)-f(y_1,y_2)|\ \mathrm{d}y_2\mathrm{d}y_1.
		\end{aligned}
	\end{equation*}
Clearly, we have $\sup_{x\in Q}|f-b_Q|^2\leq 2\sup_{x\in Q}(\RN{1}_Q)^2+2\sup_{x\in Q}(\RN{2}_Q)^2$.

\textit{Control of $\sup_{x\in Q}(\RN{1}_Q)^2$}: Notice that the integrand $|f(x_1,x_2)-f(y_1,x_2)|$ is independent of $y_2$, and $|Q|=|I_1|\cdot|I_2|$, so
	\begin{equation*}
		\RN{1}_Q=\frac{1}{|I_1|}\int_{I_1}|f(x_1,x_2)-f(y_1,x_2)|\ \mathrm{d}y_1.
	\end{equation*}
	Moreover, since $x_1,y_1\in I_1$,
	\begin{equation*}
		\begin{aligned}
			|f(x_1,x_2)-f(y_1,x_2)|
			=
			\Big|\int_{x_1}^{y_1}\partial_1f(t,x_2)\ \mathrm{d}t\Big|
			\leq
			\int_{I_1}|\partial_1f|(t,x_2)\ \mathrm{d}t.
		\end{aligned}
	\end{equation*}
The integrand $\partial_1f(t,x_2)$ is independent of $y_1$, by Cauchy--Schwarz,
	\begin{equation*}
		\begin{aligned}
			\RN{1}_Q
			\leq
			\frac{1}{|I_1|}\int_{I_1}\int_{I_1}|\partial_1f|(t,x_2)\ \mathrm{d}t\mathrm{d}y_1
			\leq
			\sqrt{|I_1|}\cdot\left(\int_{I_1}|\partial_1f|^2(t,x_2)\ \mathrm{d}t\right)^{\frac{1}{2}}.
		\end{aligned}
	\end{equation*}
	Back to $\sup_{x\in Q}(\RN{1}_Q)^2$,
	\begin{equation*}
		\begin{aligned}
			\sup_{x\in Q}(\RN{1}_Q)^2
			\leq
			\sup_{\substack{x_1\in I_1\\x_2\in I_2}}|I_1|\int_{I_1}|\partial_1f|^2(t,x_2)\ \mathrm{d}t
			\leq|I_1|\int_{I_1}\sup_{\substack{x_2\in I_2}}|\partial_1f|^2(t,x_2)\ \mathrm{d}t.
		\end{aligned}
	\end{equation*}
	For $Q(\alpha)$ with $\alpha\in\Z^2$, we write it as $I_1(\alpha_1)\times I_2(\alpha_2)$. Repeating the above argument and summing over $\alpha=(\alpha_1,\alpha_2)\in\Z^2$ shows that
\begin{equation*}
	\begin{aligned}
		\sum_{\alpha\in\Z^2}\sup_{x\in Q(\alpha)}(I_{Q(\alpha)})^2
		\leq
		&		\sum_{\alpha\in\Z^2}|I_1(\alpha_1)|\int_{I_1(\alpha_1)}\sup_{\substack{x_2\in I_2(\alpha_2)}}|\partial_1f|^2(t,x_2)\,
		\mathrm{d}t\\
		=
		&\sum_{\alpha_1\in\Z}|I_1(\alpha_1)|\int_{I_1(\alpha_1)}\sum_{\alpha_2\in\Z}\sup_{\substack{x_2\in I_2(\alpha_2)}}|\partial_1f|^2(t,x_2)\,
		\mathrm{d}t\\
		=&|I_1|\int_{\R}\sum_{\alpha_2\in\Z}\sup_{\substack{x_2\in I_2(\alpha_2)}}|\partial_1f|^2(t,x_2)\, \mathrm{d}t.
	\end{aligned}
\end{equation*}
Here we use the fact that $|I_1(\alpha_1)|=|I_1|$ is independent of $\alpha_1$, and the (disjoint) union $\bigcup_{\alpha_1\in\Z}I_1(\alpha_1)$ is $\R$.

	In \cite{Sledd--Stegenga}, it was proved that for any interval $I$ and smooth $f:I\rightarrow\R$,
	\begin{equation*}
		\sup_{I}|f|^2\leq2\bigg(\frac{1}{|I|}\int_I|f|^2+|I|\int_I|f^\prime|^2\bigg).
	\end{equation*}
	Now apply this one-dimensional inequality to $I_2(\alpha_2)$ and $x_2\mapsto\partial_1f(t,x_2)$,
	\begin{equation*}
		\sup_{\substack{x_2\in I_2(\alpha_2)}}|\partial_1f|^2(t,x_2)
		\leq
		2\bigg(\frac{1}{|I_2(\alpha_2)|}\int_{I_2(\alpha_2)}|\partial_1f|^2(t,x_2)\ \mathrm{d}x_2
		+
		|I_2(\alpha_2)|\int_{I_2(\alpha_2)}|\partial_2\partial_1f|^2(t,x_2)\ \mathrm{d}x_2\bigg).
	\end{equation*}
	Summing over $\alpha_2$ gives
	\begin{equation*}
		\begin{aligned}
			\sum_{\alpha\in\Z^2}\sup_{x\in Q(\alpha)}(I_{Q(\alpha)})^2
			\leq
			\frac{2|I_1|}{|I_2|}\int_{\R}|\partial_1f|^2(t,x_2)\ \mathrm{d}x_2
			+
			2|I_1||I_2|\int_{\R}|\partial_2\partial_1f|^2(t,x_2)\ \mathrm{d}x_2.
		\end{aligned}
	\end{equation*}
	
\textit{Control of $\sup_{x\in Q}(\RN{2}_Q)^2$}: Similar to $\RN{1}_Q$,
	\begin{equation*}
		|f(y_1,x_2)-f(y_1,y_2)|=\bigg|\int_{x_2}^{y_2}\partial_2f(y_1,t)\ \mathrm{d}t\bigg|
		\leq
		\int_{I_2}|\partial_2f|(y_1,t)\ \mathrm{d}t.
	\end{equation*}
	So by Cauchy--Schwarz, the term $\RN{2}_Q$ is bounded by
	\begin{equation*}
		\begin{aligned}
			\frac{1}{|Q|}\int_{I_1}\int_{I_2}\int_{I_2}|\partial_2f|(y_1,t)\ \mathrm{d}t\mathrm{d}y_2\mathrm{d}y_1
			&=
			\frac{1}{|I_1|}\iint_{I_1\times I_2}|\partial_2f|(y_1,t)\,\mathrm{d}(y_1,t)\\
			&\leq
			\frac{\sqrt{|Q|}}{|I_1|}\left(\int_{Q}|\partial_2f|^2\right)^{1/2},
		\end{aligned}
	\end{equation*}
which trivially implies that $\sup_{x\in Q}(\RN{2}_Q)^2
		\leq
		\frac{|I_2|}{|I_1|}\int_{Q}|\partial_2f|^2$.
Summing over $\alpha$ gives
	\begin{equation*}
		\sum_{\alpha\in\Z^2}\sup_{x\in Q(\alpha)}(\RN{2}_{Q(\alpha)})^2
		\leq
		\frac{|I_2|}{|I_1|}\sum_{\alpha\in\Z^2}\int_{Q(\alpha)}|\partial_2f|^2
		=
		\frac{|I_2|}{|I_1|}\int_{\R^2}|\partial_2f|^2.
	\end{equation*}
	
	To summarize,
	\begin{equation*}
		\begin{aligned}
			\sum_{\alpha}\sup_{Q(\alpha)}|f|^2
			&\lesssim
			\sum_{\alpha}\sup_{Q(\alpha)}|f-b_{Q(\alpha)}|^2
			+
			\sum_{\alpha}|b_{Q(\alpha)}|^2\\
			&\lesssim
			\Big(\sum_{\alpha}|\RN{1}_{Q(\alpha)}|^2
			+
			\sum_{\alpha}|\RN{2}_{Q(\alpha)}|^2\Big)
			+
			\frac{1}{|Q|}\int|f|^2\\
			&\lesssim
			\frac{|I_1|}{|I_2|}\int|\partial_1f|^2
			+
			|Q|\int|\partial_2\partial_1f|^2
			+
			\frac{|I_2|}{|I_1|}\int|\partial_2f|^2
			+
			\frac{1}{|Q|}\int|f|^2.\\
		\end{aligned}
	\end{equation*}
\end{proof}
Now we back to \eqref{Basic inequality: ell^2 sum of atoms over small rectangles} and suppose that $d=2$. An application of the Proposition~\ref{Prop: control of the ell^2 norm over the translation of cubes} to the smooth function $f=\mathcal{F}(a\circ A^{k+N})$ and the rectangle $Q=R^\ast$ yields
\begin{equation*}
	\begin{aligned}
		&b^{2(k+N)}\sum_{\alpha\in\Z^2\setminus\{(0,0)\}}
		\sup_{\eta\in R^\ast(\alpha)}\Big|\mathcal{F}\big(a\circ A^{k+N}\big)(\eta)\Big|^2\\
		\lesssim
		&b^{2(k+N)}	\bigg(\frac{|I^\ast|}{|J^\ast|}\int|\partial_1f|^2
		+
		|R^\ast|\int|\partial_2\partial_1f|^2
		+
		\frac{|J^\ast|}{|I^\ast|}\int|\partial_2f|^2
		+
		\frac{1}{|R^\ast|}\int|f|^2\bigg).\\
	\end{aligned}
\end{equation*}
By hypothesis, the atom $a$ is supported in $\Delta_k=A^k(\Delta_0)$, so the function $a\circ A^{k+N}$ is supported in $A^{-N}(\Delta_0)$ and bounded by $|A^{-N}(\Delta_0)|^{-1}$. Using $\partial^{\alpha}\widehat{f}(\xi)=\mathcal{F}\big((-2\pi\mathrm{i}x)^{\alpha}f(x)\big)(\xi)$ and the Plancherel's identity,
\begin{equation*}
	\begin{aligned}
		\frac{|I^\ast|}{|J^\ast|}\int|\partial_1\mathcal{F}(a\circ A^{k+N})|^2
		&\approx	\frac{|I^\ast|}{|J^\ast|}\int_{A^{-N}(\Delta_0)}|\eta_1|^2|a (A^{k+N}\eta)|^2\,\mathrm{d}\eta_1\mathrm{d}\eta_2\\
		&\lesssim	\frac{|I^\ast|}{|J^\ast|\cdot|A^k(\Delta_0)|^2}\int_{A^{-N}(\Delta_0)}|\eta_1|^2\,\mathrm{d}\eta_1\mathrm{d}\eta_2\\
		&\lesssim_N	\frac{|I^\ast|}{|J^\ast|b^{2k}}\int_{\Delta_0}|\eta_1|^2\,\mathrm{d}\eta_1\mathrm{d}\eta_2\\
		&\lesssim_{N,R^\ast,\Delta_0}
		b^{-2k}.
	\end{aligned}
\end{equation*}
Therefore, the first term
\begin{equation*}
	b^{2(k+N)}	\frac{|I^\ast|}{|J^\ast|}\int|\partial_1f|^2
	\lesssim_{N,R^\ast,\Delta_0}
	b^{2(k+N)}b^{-2k}
	\lesssim
	1,
\end{equation*}
where implicit constants may depend on $b,N,\Delta_0$ and so on, but are independent of $k$. Other terms can be treated by the same way. Similar argument gives the higher dimensional result of \eqref{Basic inequality: ell^2 sum of atoms over small rectangles} for $d\geq3$.

\subsection{Proof of the necessary part}

We are going to use a well-known result on \textit{Bochner--Riesz multiplier}. Suppose that $\lambda>-1$. The function $m_{\lambda} \colon \R^d \rightarrow \R$ is defined by
\begin{equation*}
	m_{\lambda}(\xi) \coloneqq
	\begin{cases}
		(1-|\xi|^2)^{\lambda} &\quad\text{if }|\xi|\leq1,\\
		0 &\quad\text{if }|\xi|>1.
	\end{cases}
\end{equation*}
\begin{prop}
The (inverse) Fourier transform of $m_{\lambda}$ can be written explicitly,
\begin{equation*}
	m_{\lambda}^{\vee}(x)=
	\frac{\Gamma(\lambda+1)}{\pi^\lambda}\frac{J_{\frac{d}{2}+\lambda}(2\pi|x|)}{|x|^{\frac{d}{2}+\lambda}},
\end{equation*}
where $J_{\frac{d}{2}+\lambda}$ is the first kind Bessel function of order $\frac{d}{2}+\lambda$. Moreover, for all $x\in\R^d$, the following inequality holds:
\begin{equation*}
	m_{\lambda}^{\vee}(x)\leq \frac{C_{d,\lambda}}{1+|x|^{\frac{d+1}{2}+\lambda}},
\end{equation*}
where $C_{d,\lambda}>0$ is a finite constant independent of $x$.
\end{prop}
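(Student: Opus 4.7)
The plan is to exploit the radial symmetry of $m_\lambda$: reduce the $d$-dimensional Fourier transform to a one-dimensional integral with a Bessel kernel, then evaluate it in closed form via a classical identity. Since $m_\lambda(\xi)$ depends only on $|\xi|$, the standard formula for the Fourier transform of a radial function gives
\begin{equation*}
    m_\lambda^\vee(x) = 2\pi|x|^{-(d/2-1)} \int_0^1 (1-r^2)^\lambda J_{d/2-1}(2\pi r|x|) \, r^{d/2} \, \mathrm{d}r.
\end{equation*}

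Next, I would apply the Sonine integral formula
\begin{equation*}
    \int_0^1 (1-r^2)^\lambda J_\nu(ar) \, r^{\nu+1} \, \mathrm{d}r = \frac{2^\lambda \Gamma(\lambda+1)}{a^{\lambda+1}} J_{\nu+\lambda+1}(a),
\end{equation*}
valid for $\lambda > -1$ and $\nu > -1$, with the choices $\nu = d/2-1$ and $a = 2\pi|x|$. A short algebraic simplification of the resulting expression recovers the stated closed-form identity.

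For the pointwise bound, I would invoke the standard two-regime estimate for Bessel functions: $|J_\mu(t)| \lesssim t^\mu$ for $0 < t \leq 1$ (from the power-series definition), and $|J_\mu(t)| \lesssim t^{-1/2}$ for $t \geq 1$ (from the asymptotic expansion $J_\mu(t) = \sqrt{2/(\pi t)}\cos(t - \mu\pi/2 - \pi/4) + O(t^{-3/2})$). Applied with $\mu = d/2+\lambda$, the small-$|x|$ range gives $|J_{d/2+\lambda}(2\pi|x|)|/|x|^{d/2+\lambda} \lesssim 1$, so $m_\lambda^\vee$ is bounded near the origin, whereas the large-$|x|$ range gives $|J_{d/2+\lambda}(2\pi|x|)|/|x|^{d/2+\lambda} \lesssim |x|^{-(d+1)/2-\lambda}$; combining the two ranges produces the uniform bound. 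Since the result is classical, the only genuine ``obstacle'' is administrative: both the Sonine identity and the Bessel asymptotics must be cited (or re-derived) from special function theory, after which the proof is essentially bookkeeping. An alternative route is induction on dimension using the identity $m_\lambda^\vee|_{d+2} = c_\lambda\, r^{-1} \partial_r\bigl(m_{\lambda+1}^\vee|_d\bigr)$ with a base case at $d = 1$, but the radial-Fourier-plus-Sonine approach seems most direct.
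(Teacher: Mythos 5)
Your proof is correct, and it follows the canonical route. The paper itself does not prove this proposition --- it simply cites it as ``standard'' and points to Appendix B of Grafakos --- so you have supplied a derivation that the paper deliberately omits. The radial Fourier transform formula plus Sonine's finite integral (with $\nu = d/2-1$, $a = 2\pi|x|$) is exactly how the identity is obtained in the textbook reference, and the algebra you allude to does indeed simplify
\begin{equation*}
    2\pi|x|^{-(d/2-1)} \cdot \frac{2^\lambda\Gamma(\lambda+1)}{(2\pi|x|)^{\lambda+1}} J_{d/2+\lambda}(2\pi|x|)
    = \frac{\Gamma(\lambda+1)}{\pi^\lambda}\frac{J_{d/2+\lambda}(2\pi|x|)}{|x|^{d/2+\lambda}}.
\end{equation*}
The two-regime Bessel bound is also the standard one, and the needed hypothesis $d/2+\lambda > -1$ is guaranteed by $\lambda > -1$ and $d\geq 1$. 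One small point worth being explicit about: the displayed inequality in the proposition has no absolute value on the left, but since $J_{d/2+\lambda}$ oscillates in sign, what is really being asserted (and what your argument proves, and what the paper later uses) is the bound on $|m_\lambda^\vee(x)|$; you should state that. Your alternative idea of induction on $d$ via a radial derivative identity is a valid secondary route but gains nothing here, so the Sonine approach you chose as primary is the right call.
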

This result is standard and can be found in many textbooks in Fourier analysis, for example, at the Appendix B of \cite{CFA}.
\begin{lem}\label{Lemma: A special construction of H^1 function under the L^2 case}
	Let $g\in L^2(\R^d)$ and assume that $\widehat{g}=0$ on $\Delta_k^\ast$. For $\lambda>-1$, we define
	\begin{equation*}
	\chi_{k,\lambda}(x) \coloneqq
	\begin{cases}
		(1-|P(A^\ast)^{-k}x|^2)^{\lambda} &\quad\text{if }|P(A^\ast)^{-k}x|\leq1,\\
		0 &\quad\text{otherwise}.
	\end{cases}
\end{equation*}
If $f\coloneq g\cdot\widehat{\chi_{k,\lambda}}$, then for $\lambda$ sufficiently large,
	\begin{equation*}
		\|f\|_{H^1}\lesssim_{\lambda,d}
		b^{k/2}\|g\|_{L^2}.
	\end{equation*}
\end{lem}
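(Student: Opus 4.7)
The plan is to write $f$ as a convergent sum of $L^{2}$-atoms on anisotropic balls centered at the origin. The dilation factor $b^{k/2}$ will emerge from Cauchy--Schwarz summation over an annular index, exploiting the rapid spatial decay of $h := \widehat{\chi_{k,\lambda}}$ together with the vanishing of $\widehat{g}$ on $\Delta_{k}^{\ast}$.

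The first step is to obtain a pointwise bound for $h$. Since $\chi_{k,\lambda}(x) = m_{\lambda}\big(P(A^{\ast})^{-k}x\big)$, an affine change of variable gives $h(\xi) = b^{k}|\det P|^{-1} m_{\lambda}^{\vee}\big(P^{-T} A^{k}\xi\big)$, and the Bochner--Riesz decay supplies
\[
|h(\xi)| \lesssim \frac{b^{k}}{\big(1+|P^{-T} A^{k}\xi|\big)^{(d+1)/2+\lambda}}.
\]
Combining with Proposition~\ref{Prop: comparison on the growth of the quasi norm and Euclidean norm}, for $\xi$ in the annulus $E_{j} := \Delta_{j+1-k} \setminus \Delta_{j-k}$ with $j \geq 0$ one finds $|P^{-T} A^{k}\xi| \gtrsim b^{j\zeta_{-}}$, so $|h(\xi)| \lesssim b^{k} b^{-\alpha j}$ with $\alpha := \zeta_{-}\big((d+1)/2 + \lambda\big)$; this $\alpha$ may be made arbitrarily large by enlarging $\lambda$. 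On $\Delta_{-k}$ itself the trivial bound $|h| \lesssim b^{k}$ will suffice. Next, the vanishing moment $\int f = 0$ follows because $\widehat{f}(0) = \int \widehat{g}\,\chi_{k,\lambda}$, and $\chi_{k,\lambda}$ is supported in $\Delta_{k}^{\ast}$, where $\widehat{g}$ vanishes by hypothesis.

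With these tools in hand, set $f_{j} := f\mathbf{1}_{E_{j}}$, so that $f = \sum_{j} f_{j}$ and $\sum_{j}\int f_{j} = 0$. Cauchy--Schwarz on $E_{j}$ combined with the bound on $h$ yields $\|f_{j}\|_{L^{2}} \lesssim b^{k} b^{-\alpha\max(j,0)}\|g\|_{L^{2}(E_{j})}$, and $f_{j}$ is (modulo its mean) a multiple of an $L^{2}$-atom supported in $\Delta_{j+1-k}$ with coefficient $\lambda_{j} \approx b^{(j+1-k)/2}\|f_{j}\|_{L^{2}}$. A final Cauchy--Schwarz in the index $j$ then gives
\[
\sum_{j}\lambda_{j}
\lesssim b^{k/2}\Big(\sum_{j}b^{j - 2\alpha\max(j,0)}\Big)^{1/2}\|g\|_{L^{2}}
\lesssim b^{k/2}\|g\|_{L^{2}},
\]
the geometric series converging once $\alpha > 1/2$, i.e.\ for $\lambda$ large.

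The main obstacle lies in the phrase ``modulo its mean'': each $f_{j}$ does not automatically have vanishing moment. This will be handled by a standard telescoping device: choose bumps $\psi_{j}$ with $\int \psi_{j} = 1$ and $\supp \psi_{j} \subset \Delta_{j+1-k}$, replace $f_{j}$ by $\tilde{f}_{j} := f_{j} - (\int f_{j})\psi_{j}$, and rewrite the ``error'' $\sum_{j}(\int f_{j})\psi_{j}$ as $\sum_{j} c_{j}(\psi_{j}-\psi_{j-1})$ where $c_{j} := \sum_{i \geq j}\int f_{i}$ (the telescope closes because $\sum_{i}\int f_{i} = 0$); each $c_{j}(\psi_{j} - \psi_{j-1})$ is a multiple of an honest atom on $\Delta_{j+1-k}$, and the $\ell^{1}$ bound on the $c_{j}$ follows from the same Cauchy--Schwarz bookkeeping as above. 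A minor notational point is that Section~2.2 recalls the $L^{\infty}$-atomic decomposition, whereas the argument here uses $L^{2}$-atoms; these are known to give equivalent norms on $H^{1}_{A}$, see \cite{Bownik_book}.
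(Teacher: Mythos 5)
Your argument is correct, and it takes a genuinely different route from the paper's. The paper proves the lemma by duality: it pairs $f$ against $u\in\mathrm{BMO}$, subtracts the mean $c=\Avg_{A^{-k}(\Delta_0)}u$ (licit since $\int f=0$), applies Cauchy--Schwarz to reduce to a weighted $L^2$ bound $\int|u-c|^2|\widehat{\chi_{k,\lambda}}|^2\lesssim b^{-k}\|u\|_{\mathrm{BMO}}^2$, and then controls the latter by an annular decomposition in $x$-space combined with the Bochner--Riesz decay of $\widehat{\chi_{k,\lambda}}$ and telescoping estimates on BMO averages. You instead decompose $f$ itself over the anisotropic annuli $E_j=\Delta_{j+1-k}\setminus\Delta_{j-k}$, extract $L^2$-atoms, and repair the missing moment conditions by the standard $c_j(\psi_j-\psi_{j-1})$ telescope; the required geometric summability again comes from the decay exponent $\alpha=\zeta_{-}\big((d+1)/2+\lambda\big)>1/2$, which is the same threshold the paper's argument needs (there phrased as $(d+1+2\lambda)\zeta_{-}>1$). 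The duality route is slightly shorter and sidesteps the vanishing-moment bookkeeping; your route is more self-contained in that it only needs that $L^2$-atoms lie uniformly in $H^1_A$ rather than the full $(H^1_A)^\ast=\mathrm{BMO}_A$ duality (although you should cite the equivalence of $L^2$- and $L^\infty$-atomic $H^1_A$ norms explicitly). One small point to tighten: in bounding $\sum_j|c_j|$ you should use $c_j=\sum_{i\geq j}\int f_i=-\sum_{i<j}\int f_i$ to get geometric decay of $|c_j|$ as $j\to-\infty$ as well as $j\to+\infty$; a one-sided estimate alone would diverge over $j\in\Z$.
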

\begin{proof}The argument here is adapted from \cite{Fefferman--Stein_real_Hardy_spaces}. We remind the reader that $\Delta^\ast_0$ is the ellipse associated with the matrix $A^\ast$, so we may assume that $\Delta^\ast_0=P^{-1}(B(0,1))$ for certain $P\in\mathbf{GL}(d,\R)$. Moreover, after re-scaling, we can assume that $|\Delta_0^\ast|\approx1$. It is trivial that $\supp(\chi_{k,\lambda})\subset\Delta^\ast_{k}$.
	
	We begin by showing that  $\int f=0$. Using the support conditions of $\widehat{g}$ and $\chi_{k,\lambda}$,
	\begin{equation*}
		\widehat{f}(0)
		=\int_{\R^d}\widehat{g}(-y)\chi_{k,\lambda}(y)\,\mathrm{d}y
		=
		\int_{\Delta^\ast_{k}}\widehat{g}(-y)\chi_{k,\lambda}(y)\,\mathrm{d}y
		=0.
	\end{equation*}
To prove that $f\in H^1$, we use duality. Let $u\in \mathrm{BMO}$, then
	\begin{equation*}
		\begin{aligned}
			\Big|\int fu\Big|
			&=\Big|\int f(u-c)\Big|
			&\leq
			\|g\|_2
			\bigg(\int|u-c|^2|\widehat{\chi_{k,\lambda}}|^2\bigg)^{\frac{1}{2}},
		\end{aligned}
	\end{equation*}
where $c$ is the average of $u$ over $A^{-k}\Delta_0$,
\begin{equation*}
	c
	=
	\Avg_{A^{-k}(\Delta_0)}
	\coloneqq
	\frac{1}{|A^{-k}\Delta_0|}\int_{A^{-k}\Delta_0}u(x)\,\mathrm{d}x.
\end{equation*}
Easy calculation shows that
\begin{equation*}
	\begin{aligned}
\widehat{\chi_{k,\lambda}}(\xi)
	=
	\frac{b^k}{|\det(P)|}
	\widehat{m_{\lambda}}(P^{-\ast}A^k\xi).
	\end{aligned}
\end{equation*}
Using the properties of $m_{\lambda}$,  we have
\begin{equation*}
	\begin{aligned}
		|\widehat{\chi_{k,\lambda}}(\xi)|
		\lesssim
		\frac{b^k}{|\det(P)|}\times
		\frac{1}{1+|P^{-\ast}A^k\xi|^{\frac{d+1}{2}+\lambda}}.
	\end{aligned}
\end{equation*}
Since all norms on $\R^d$ are equivalent, we have $|P^{-\ast}A^k\xi|\approx_P|A^k\xi|$, and now
\begin{equation*}
	\bigg(\int|u-c|^2|\widehat{\chi_{k,\lambda}}|^2\bigg)^{\frac{1}{2}}
	\lesssim_P
	b^k\bigg(\int\frac{|u-c|^2}{1+|A^kx|^{d+1+2\lambda}}
	\,\mathrm{d}x\bigg)^{\frac{1}{2}}
\end{equation*}
Decompose
\begin{equation*}
	\begin{aligned}
		\int\frac{|u-\Avg_{A^{-k}(\Delta_0)}(u)|^2}{1+|A^kx|^{d+1+2\lambda}}
		&=
		\int_{A^{-k}(\Delta_0)}\frac{|u-\Avg_{A^{-k}(\Delta_0)}(u)|^2}{1+|A^kx|^{d+1+2\lambda}}\\
		&+
		\sum_{l=0}^{\infty}
		\int_{A^{l+1-k}(\Delta_0)\setminus A^{l-k}(\Delta_0)}\frac{|u-\Avg_{A^{-k}(\Delta_0)}(u)|^2}{1+|A^kx|^{d+1+2\lambda}}.
	\end{aligned}
\end{equation*}
The trivial fact $|A^kx|\geq0$ implies that
\begin{equation*}
	\begin{aligned}
	\int_{A^{-k}(\Delta_0)}\frac{|u-\Avg_{A^{-k}(\Delta_0)}(u)|^2}{1+|A^kx|^{d+1+2\lambda}}
	&\lesssim
	\int_{A^{-k}(\Delta_0)}|u-\Avg_{A^{-k}(\Delta_0)}(u)|^2\\
	&\lesssim
	|A^{-k}(\Delta_0)|\cdot\|u\|_{\mathrm{BMO}}^2\\
	&\approx
	b^{-k}\cdot\|u\|_{\mathrm{BMO}}^2.
	\end{aligned}
\end{equation*}
We decompose, for each $l\geq0$, the integral $	\int_{A^{l+1-k}(\Delta_0)\setminus A^{l-k}(\Delta_0)}$ as
\begin{equation*}
	\begin{aligned}
		&\int_{A^{l+1-k}(\Delta_0)\setminus A^{l-k}(\Delta_0)}\frac{|u-\Avg_{A^{l+1-k}(\Delta_0)}(u)|^2}{1+|A^kx|^{d+1+2\lambda}}\\
		+
		&\int_{A^{l+1-k}(\Delta_0)\setminus A^{l-k}(\Delta_0)}\frac{|\Avg_{A^{l+1-k}(\Delta_0)}(u)-\Avg_{A^{-k}(\Delta_0)}(u)|^2}{1+|A^kx|^{d+1+2\lambda}}.
	\end{aligned}
\end{equation*}
For $x\in A^{l+1-k}(\Delta_0)\setminus A^{l-k}(\Delta_0)$,
\begin{equation*}
	A^kx\in A^{l+1}(\Delta_0)\setminus A^{l}(\Delta_0), \text{ and } \rho(A^kx)=b^{l}\geq1.
\end{equation*} 
Proposition~\ref{Prop: comparison on the growth of the quasi norm and Euclidean norm} says that $|x|\gtrsim\rho(x)^{\zeta_-}$ whenever $\rho(x)\geq1$. Replacing $x$ by $A^kx$, we get $|A^kx|
	\gtrsim
	\rho(A^kx)^{\zeta_-}
	= 
	b^{l\zeta_-}$.
	
Now
\begin{equation*}
	\begin{aligned}
		\int_{A^{l+1-k}(\Delta_0)\setminus A^{l-k}(\Delta_0)}\frac{|u-\Avg_{A^{l+1-k}(\Delta_0)}(u)|^2}{1+|A^kx|^{d+1+2\lambda}}
		&\lesssim
		\int_{A^{l+1-k}(\Delta_0)}\frac{|u-\Avg_{A^{l+1-k}(\Delta_0)}(u)|^2}{b^{l(d+1+2\lambda)\zeta_{-}}}\\
		&\lesssim
		\frac{|A^{l+1-k}(\Delta_0)|}{b^{l(d+1+2\lambda)\zeta_{-}}}\|u\|_{\mathrm{BMO}}^2\\
		&\approx
		\frac{b^{l+1-k}}{b^{l(d+1+2\lambda)\zeta_{-}}}\|u\|_{\mathrm{BMO}}^2.
	\end{aligned}
\end{equation*}
For $u\in\mathrm{BMO}$, using the facts $\Delta_0\subset A(\Delta_0)$ and $|A(\Delta_0)|=b|\Delta_0|$,
\begin{equation*}
	\begin{aligned}
		|\Avg_{\Delta_0}(u)-\Avg_{A(\Delta_0)}(u)|
		&=
		\frac{1}{|\Delta_0|}\Big|\int_{\Delta_0}u-\Avg_{A(\Delta_0)}(u)\Big|\\
		&\lesssim
		\frac{b}{|A(\Delta_0)|}\int_{A(\Delta_0)}\Big|u-\Avg_{A(\Delta_0)}(u)\Big|\\
		&\leq
		b
		\|u\|_{\mathrm{BMO}}.
	\end{aligned}
\end{equation*}
By telescoping,
\begin{equation*}
	|\Avg_{A^{l+1-k}(\Delta_0)}(u)-\Avg_{A^{-k}(\Delta_0)}(u)|
	\lesssim
	(l+1)b\|u\|_{\mathrm{BMO}}.
\end{equation*}
Consequently,
\begin{equation*}
	\begin{aligned}
	&\int_{A^{l+1-k}(\Delta_0)\setminus A^{l-k}(\Delta_0)}\frac{|\Avg_{A^{l+1-k}(\Delta_0)}(u)-\Avg_{A^{-k}(\Delta_0)}(u)|^2}{1+|A^kx|^{d+1+2\lambda}}\\
	\lesssim
	&(l+1)^2b^2\|u\|_{\mathrm{BMO}}^2\int_{A^{l+1-k}(\Delta_0)\setminus
	A^{l-k}(\Delta_0)}\frac{1}{1+|A^kx|^{d+1+2\lambda}}\\
	\lesssim
	&(l+1)^2b^2\|u\|_{\mathrm{BMO}}^2\frac{b^{l+1-k}}{b^{l(d+1+2\lambda)\zeta_{-}}}.\\
	\end{aligned}
\end{equation*}
Summing over $l\in\N$,
\begin{equation*}
	\begin{aligned}
		&\sum_{l=0}^{\infty}
		\int_{A^{l+1-k}(\Delta_0)\setminus A^{l-k}(\Delta_0)}\frac{|u-\Avg_{A^{-k}(\Delta_0)}(u)|^2}{1+|A^kx|^{d+1+2\lambda}}\\
		\lesssim
		&\sum_{l=0}^{\infty}
		\frac{b^{l+1-k}(1+(l+1)^2b^2)}{b^{l(d+1+2\lambda)\zeta_{-}}}\|u\|^2_{\mathrm{BMO}}.\\
	\end{aligned}
\end{equation*}
Now we choose $\lambda$ large, so that $1<(d+1+2\lambda)\zeta_{-}$, then the sum is controlled by $b^{-k}\|u\|^2_{\mathrm{BMO}}$. Combining these estimates gives us
\begin{equation*}
	\begin{aligned}
		\Big|\int fu\Big|\lesssim
		b^{\frac{k}{2}}\|g\|_2\|u\|_{\mathrm{BMO}}.
	\end{aligned}
\end{equation*}
By duality, the $H^1$ norm of $f$ is controlled by $\mathcal{O}(b^{\frac{k}{2}}\|g\|_2)$.
\end{proof}

\begin{rmk}
Under the isotropic settings, Sledd \& Stegenga proved results in \cite{Sledd--Stegenga} by choosing the ball multiplier $m_0$. However, due to the anisotropic nature, the ball multiplier does not work here: We need an additional $\lambda>0$ to offset the increase of $b^l$ in the numerator. Readers can easily verify that, if $\lambda=0$, then the key inequality $1<(d+1+2\lambda)\zeta_{-}$ does not necessarily hold for a general dilation matrix $A$.
\end{rmk}

Back to the proof of the necessary part. Suppose that \eqref{Condition: the H^1 to F(L^1) inequality} holds. Applying \eqref{Condition: the H^1 to F(L^1) inequality} for $f=g\cdot\widehat{\chi_{k,\lambda}}$ with $\|g\|_2\leq1$,
\begin{equation*}
	\Big|\int\widehat{g}\ast\chi_{k,\lambda}\,\mathrm{d}\mu\Big|
	=
	\int|\widehat{f}|\,\mathrm{d}\mu\lesssim\|f\|_{H^1}\lesssim b^{\frac{k}{2}},
\end{equation*}
while
\begin{equation*}
	\int\widehat{g}\ast\chi_{k,\lambda}\,\mathrm{d}\mu
	=
	\int\widehat{g}(y)\int\chi_{k,\lambda}(x-y)\,\mathrm{d}\mu(x)\mathrm{d}y.
\end{equation*}
Therefore,
\begin{equation*}
	\Big|\int\widehat{g}(y)\int\chi_{k,\lambda}(x-y)\,\mathrm{d}\mu(x)\mathrm{d}y\Big|\lesssim b^{\frac{k}{2}}
\end{equation*}
By our hypothesis, the Fourier transform $\widehat{g}$ is supported in $(\Delta^\ast_k)^c$, so by duality,
\begin{equation*}
	\begin{aligned}
	\int_{(\Delta^\ast_k)^c}\bigg|\int\chi_{k,\lambda}(x-y)\,\mathrm{d}\mu(x)\bigg|^2\,\mathrm{d}y
	\lesssim
	b^k.
	\end{aligned}
\end{equation*}
Since $\supp(\chi_{k,\lambda})\subset\Delta^\ast_k$, we have $	\int\chi_{k,\lambda}(x-y)\,\mathrm{d}\mu(x)=\int_{y+\Delta^\ast_k}\chi_{k,\lambda}(x-y)\,\mathrm{d}\mu(x)$.
For $\epsilon>0$ small, and
$x\in(1-\epsilon)\Delta^\ast_k$, it follows immediately that
\begin{equation*}
	P(A^\ast)^{-k}x
	\in
	(1-\epsilon)P(A^\ast)^{-k}\Delta^\ast_k
	=
	(1-\epsilon)B(0,1),
\end{equation*}
In consequence, the norm $|P(A^\ast)^{-k}x|\leq1-\epsilon$, and $\chi_{k,\lambda}(x)\gtrsim_{\epsilon,\lambda}1$. Thus,
\begin{equation*}
	\begin{aligned}
	\int_{y+\Delta^\ast_k}\chi_{k,\lambda}(x-y)\,\mathrm{d}\mu(x)
	&\geq
	\int_{y+(1-\epsilon)\Delta^\ast_k}\chi_{k,\lambda}(x-y)\,\mathrm{d}\mu(x)\\
	&\gtrsim_{\epsilon,\lambda}
	\mu\big(y+(1-\epsilon)\Delta^\ast_k\big).
	\end{aligned}
\end{equation*}
This amounts to the following inequality:
\begin{equation*}
	\begin{aligned}
		\int_{(\Delta^\ast_k)^c}\mu\big(y+(1-\epsilon)\Delta^\ast_k\big)^2\,\mathrm{d}y
		\lesssim
		b^k.
	\end{aligned}
\end{equation*}
We know that $\Delta^\ast_k\subset (A^\ast)^k(R^\ast)$, so
\begin{equation*}
 \bigcup_{\alpha\neq0}(A^\ast)^k(R^\ast(\alpha))
 =
 \Big((A^\ast)^k(R^\ast)\Big)^c
 \subset 	
 (\Delta^\ast_k)^c,
\end{equation*}
and
\begin{equation*}
	\begin{aligned}
		\sum_{\alpha\neq0}\int_{(A^\ast)^k(R^\ast(\alpha))}\mu\big(y+(1-\epsilon)\Delta^\ast_k\big)^2\,\mathrm{d}y
		\lesssim
		b^k.
	\end{aligned}
\end{equation*}
Also, since $(A^\ast)^{-N}(R^\ast)\subset\Delta^\ast_0$,
\begin{equation*}
(1-\epsilon)(A^\ast)^{-N}(A^\ast)^k(R^\ast)
\subset
(1-\epsilon)\Delta^\ast_k.
\end{equation*}
By the geometry of $R^\ast$, it is easy to see that there exist a small constant $c>0$, so that 
\begin{equation*}
	c\cdot R^\ast\subset(1-\epsilon)(A^\ast)^{-N}(R^\ast).
\end{equation*}
Replacing  $(1-\epsilon)\Delta^\ast_k$ by the smaller subset $(A^\ast)^k(cR^\ast)$ leads to
\begin{equation}\label{Control the sum of measures on translate rectangels}
	\begin{aligned}
		\sum_{\alpha\neq0}\int_{(A^\ast)^k(R^\ast(\alpha))}\mu\big(y+(A^\ast)^k(cR^\ast)\big)^2\,\mathrm{d}y
		\lesssim
		b^k.
	\end{aligned}
\end{equation}
Next, we take a specific value $c=1/2$ as an example to explain the proof, the general case $c>0$ is similar, but the notations will be more complicated. 

For $\alpha$ fixed, we decompose $R^\ast(\alpha)$ into 9 congruent parts $r_i$. It is evident that, the translated rectangle $y+\frac{1}{2}R^\ast$ contains $r_i$, whenever $y$ is near the center of $r_i$. Thus, we define a smaller rectangle concentric with $r_i$:
\begin{equation*}
	r_{i}^\prime
	\coloneqq
	\big\{y\in r_i:\textrm{ the rectangle }y+\frac{1}{2}R^\ast\textrm{ contains }r_i\big\}.
\end{equation*}
A trivial verification shows that $|r_i|/|r_{i}^\prime|\approx1$. For general $c>0$, the number of congruent parts into which $R^\ast(\alpha)$ is divided, and the area quotient $|r_i|/|r_{i}^\prime|$ depend of course on $c$, but the important point is that they are independent of $k$.
	\begin{figure}[H]
	\centering 
	\includegraphics[width=0.6\textwidth]{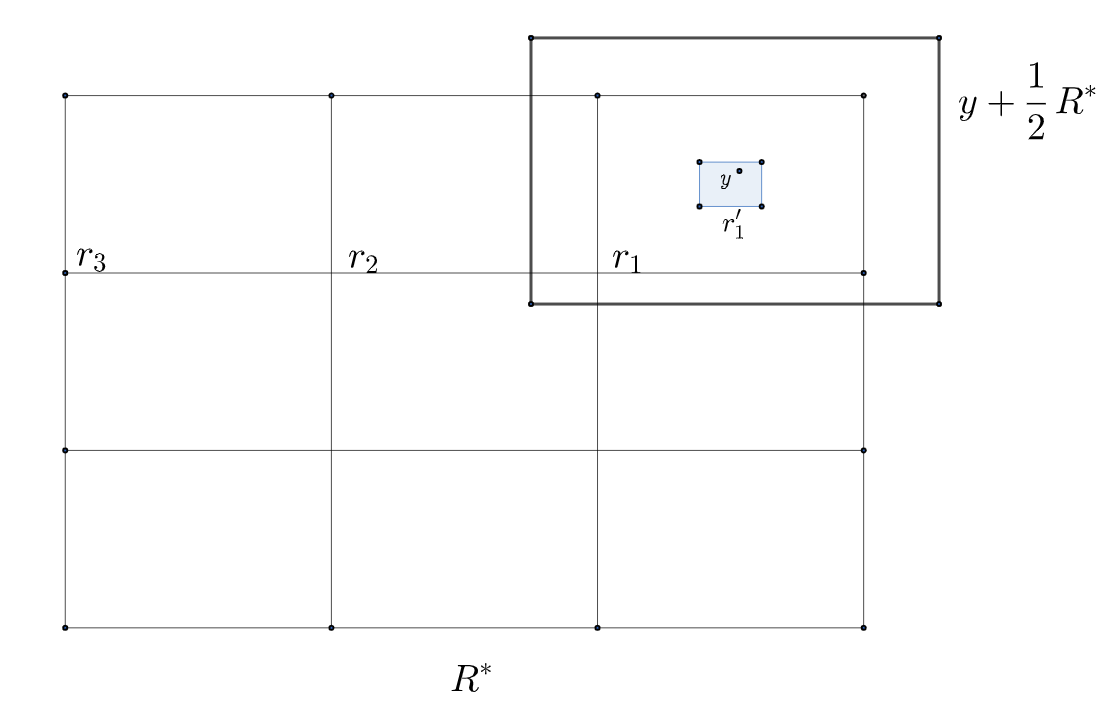} 
	\label{covering of rectangles by its half} 
	\caption{The rectangle $y+\frac{1}{2}R^\ast$ contains $r_1$, whenever $y\in r_1^\prime$.}
\end{figure}
We have  $r_i\subset y+\frac{1}{2}R^\ast,\text{ when }y\in r^\prime_i$. Applying the mapping $(A^\ast)^k$ shows that
\begin{equation*}
	(A^\ast)^k(r_i)
	\subset y+(A^\ast)^k\Big(\frac{1}{2}R^\ast\Big),\text{ whenever }y\in(A^\ast)^k(r^\prime_i).
\end{equation*}
Based on the previous analysis, the following inequality is clearly established:
\begin{equation*}
	\begin{aligned}
		\int_{(A^\ast)^k(R^\ast(\alpha))}\mu\big(y+(A^\ast)^k(cR^\ast)\big)^2\,\mathrm{d}y
		&=
		\sum_{1\leq i\leq9}
		\int_{(A^\ast)^k(r_i)}\mu\big(y+(A^\ast)^k(cR^\ast)\big)^2\,\mathrm{d}y\\
		&\geq
		\sum_{1\leq i\leq9}
		\int_{(A^\ast)^k(r^\prime_i)}\mu\big(y+(A^\ast)^k(cR^\ast)\big)^2\,\mathrm{d}y\\
		&\geq
		\sum_{1\leq i\leq9}
		\int_{(A^\ast)^k(r^\prime_i)}\mu\big((A^\ast)^k(r_i)\big)^2\,\mathrm{d}y\\
		&=
		\sum_{1\leq i\leq9}
		\mu\big((A^\ast)^k(r_i)\big)^2|(A^\ast)^k(r^\prime_i)|.\\
\end{aligned}
\end{equation*}
Since $|r^\prime_i|\approx|r_i|=|R^\ast|/9$, and $|(A^\ast)^k(r^\prime_i)|=b^k|r^\prime_i|$, the above inequality can be further written as
\begin{equation*}
	\begin{aligned}
		\int_{(A^\ast)^k(R^\ast(\alpha))}\mu\big(y+(A^\ast)^k(cR^\ast)\big)^2\,\mathrm{d}y
		&\gtrsim
		b^k|R^\ast|\sum_{1\leq i\leq9}
		\mu\big((A^\ast)^k(r_i)\big)^2\\
		&\gtrsim
		b^k|R^\ast|
		\mu\Big(\bigcup_{1\leq1\leq9}(A^\ast)^k(r_i)\Big)^2.\\
	\end{aligned}
\end{equation*}
The last line comes from the trivial estimate $ \sum_{1\leq i\leq 9}|a_i|^2\geq \frac{1}{9}\big(\sum_{1\leq i\leq 9}|a_i|\big)^2$.

It is clear that
\begin{equation*}
	\bigcup_{1\leq1\leq9}(A^\ast)^k(r_i)
	=
	(A^\ast)^k\Big(\bigcup_{1\leq1\leq9}r_i\Big)
	=
	(A^\ast)^k(R^\ast(\alpha)).
\end{equation*}
Combining the above estimates gives us
\begin{equation*}
	\begin{aligned}
		\int_{(A^\ast)^k(R^\ast(\alpha))}\mu\big(y+(A^\ast)^k(cR^\ast)\big)^2\,\mathrm{d}y
		&\gtrsim
		b^k|R^\ast|
		\mu\bigg((A^\ast)^k(R^\ast(\alpha))\bigg)^2.\\
	\end{aligned}
\end{equation*}
Now back to \eqref{Control the sum of measures on translate rectangels}, the following inequality holds,
\begin{equation*}
	\begin{aligned}
		b^k
		&\gtrsim
		\sum_{\alpha\neq0}\int_{(A^\ast)^k(R^\ast(\alpha))}\mu\big(y+(A^\ast)^k(cR^\ast)\big)^2\,\mathrm{d}y\\
		&\gtrsim
		\sum_{\alpha\neq0}b^k|R^\ast|
		\mu\bigg((A^\ast)^k(R^\ast(\alpha))\bigg)^2.\\
	\end{aligned}
\end{equation*}
This implies that, for all $k\in\Z$,
\begin{equation*}
	\begin{aligned}
		\sum_{\alpha\neq0}
		\mu\Big((A^\ast)^k\big(R^\ast(\alpha)\big)\Big)^2
		\lesssim
		|R^\ast|^{-1}
		\approx
		1,
	\end{aligned}
\end{equation*}
which is exactly our desired result.
\begin{rmk}
 In the original proof in \cite{Sledd--Stegenga}, from the inequality
	\begin{equation*}
		\Big|\int\widehat{g}(y)\int\chi_{k,\lambda}(x-y)\,\mathrm{d}\mu(x)\mathrm{d}y\Big|\lesssim b^{\frac{k}{2}},
	\end{equation*}
the authors directly concluded that
	\begin{equation*}
		\begin{aligned}
			\int_{\R^d}\bigg|\int\chi_{k,\lambda}(x-y)\,\mathrm{d}\mu(x)\bigg|^2\,\mathrm{d}y
			\lesssim
			b^k,
		\end{aligned}
	\end{equation*}
and then \eqref{Condition: control of the ell^2 sum over rectangels} can be easily deduced. 

Of course, this is inaccurate. Since $\widehat{g}=0$ in $\Delta_k^\ast$, we have no information on the behavior of $y\mapsto\int\chi_{k,\lambda}(x-y)\,\mathrm{d}\mu(x)$ for $y\in \Delta_k^\ast$, and we can only choose $\Omega=(\Delta_k^\ast)^c$ (rather than $\R^d$) when using the duality:
\begin{equation*}
	\|f\|_{L^2(\Omega)}
	=
	\sup_{\|g\|_{L^2(\Omega)}\leq1}\left|\int fg\right|.
\end{equation*}
\end{rmk}
\section{Further results} Under the isotropic settings, the following theorem was announced in \cite{Sledd--Stegenga}.
\begin{namedtheorem}[The $H^1\mapsto L^p$ boundedness criterion]
	Let $\mu$ be a positive Borel measure on $\R^d\setminus\{0\}$, and $H^1(\R^d)$ be the isotropic Hardy space, then
	\begin{equation*}
		\left(\int_{\R^d}|\widehat{f}(\xi)|^p\,\mathrm{d}\mu(\xi)\right)^{\frac{1}{p}}	\lesssim\|f\|_{H^1(\R^d)},
	\end{equation*} 
	if and only if the following inequalities hold respectively:
	\begin{enumerate}
		\item When $1\leq p<2$,	\begin{equation*}
			\sup_{\epsilon>0}\Big(\sum_{\alpha\in\Z^d\setminus\{0\}}\mu\big(Q_{\epsilon}(\alpha)\big)^\frac{2}{2-p}\Big)^{\frac{2-p}{2}}<\infty.
		\end{equation*}
		\item When $2\leq p<\infty$,
		\begin{equation*}
			\sup_{\epsilon>0}\mu\Big(\{x\in\R^d:\epsilon\leq|x|<2\epsilon\}\Big)<\infty.
		\end{equation*}
	\end{enumerate}
\end{namedtheorem}
This theorem is stated in \cite{Sledd--Stegenga} but without proofs. We feel that, although the authors of \cite{Sledd--Stegenga} wrote that this was just an easy generalization of \textbf{the} $H^1$ \textbf{boundedness criterion} in Section~\ref{Section: Introduction}, some modifications are still necessary and not that trivial. In this subsection, we state and give a sketchy proof of the corresponding result under the anisotropic settings.
\begin{thm}\label{The H^1 to L^p boundedness criterion}
	Let $\mu$ be a positive Borel measure on $\R^d\setminus\{0\}$. Then
	\begin{equation}\label{Condition: the H^1 to F(L^p) inequality}
			\left(\int_{\R^d}|\widehat{f}(\xi)|^p\,\mathrm{d}\mu(\xi)\right)^{\frac{1}{p}}\lesssim
			\|f\|_{H^1(\R^d)},
	\end{equation}
	if and only if the following inequalities hold respectively.
	\begin{enumerate}
		\item When $1\leq p<2$,	\begin{equation}\label{Condition: control of the ell^{2/(2-p)} sum over rectangels}
			\sup_{k\in\Z}\left(\sum_{\alpha\in\Z^d\setminus\{0\}}\mu\left(\left(A^\ast\right)^k\left(R^\ast\left(\alpha\right)\right)^\frac{2}{2-p}\right)\right)^{\frac{2-p}{2}}
			<
			\infty.
		\end{equation}
	\item When $2\leq p<\infty$,
	\begin{equation}\label{Condition: control of the ell^{infty} sum over annulus}
		\sup_{k\in\Z}\mu\Big(\{x\in\R^d:\rho_\ast(x)=b^k\}\Big)
		<
		\infty.
	\end{equation}
	\end{enumerate}
\end{thm}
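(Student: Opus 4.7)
My plan is to adapt the proof of the main theorem, splitting by the two ranges of $p$ and reusing the Bochner--Riesz test function $f=g\cdot\widehat{\chi_{k,\lambda}}$ from Lemma~\ref{Lemma: A special construction of H^1 function under the L^2 case} for the necessity. For the sufficiency when $1\le p<2$, I would run verbatim the atom-based decomposition of $\int|\widehat{a}|^p\,\mathrm{d}\mu$ from the proof of the sufficient part of the main theorem, with every use of Cauchy--Schwarz replaced by Hölder's inequality with the conjugate exponents $\frac{2}{2-p}$ and $\frac{2}{p}$. On the far part this produces
\begin{equation*}
\sum_{\alpha\ne0}\mu\bigl((A^\ast)^{-(k+N)}R^\ast(\alpha)\bigr)\sup|\widehat{a}|^p
\le
\Big(\sum_\alpha\mu(\cdots)^{\frac{2}{2-p}}\Big)^{\frac{2-p}{2}}\Big(\sum_\alpha\sup|\widehat{a}|^2\Big)^{\frac{p}{2}},
\end{equation*}
the two factors being controlled by the hypothesis \eqref{Condition: control of the ell^{2/(2-p)} sum over rectangels} and by Proposition~\ref{Prop: control of the ell^2 norm over the translation of cubes} respectively. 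On the near-origin annulus $(A^\ast)^{l}(R^\ast)\setminus(A^\ast)^{l-N}(R^\ast)$ the same finite-sum Hölder applied to the $\mathcal{O}_N(1)$ covering from Proposition~\ref{Prop: covering large rectangle by finitely many small ones} gives $\mu(\text{annulus})\lesssim_N 1$, after which the Bownik--Wang decay $|\widehat{a}|^p\lesssim b^{kp\zeta_-}\rho_\ast^{p\zeta_-}$ turns the sum over $l\le-(k+N)$ into a convergent geometric series.

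For the sufficiency when $p\ge 2$ I would first use the trivial estimate $\|\widehat{f}\|_\infty\le\|f\|_1\le\|f\|_{H^1}$ to write
\begin{equation*}
\int|\widehat{f}|^p\,\mathrm{d}\mu
\le
\|\widehat{f}\|_\infty^{p-2}\int|\widehat{f}|^2\,\mathrm{d}\mu
\le
\|f\|_{H^1}^{p-2}\int|\widehat{f}|^2\,\mathrm{d}\mu,
\end{equation*}
thereby reducing to the case $p=2$. The latter is an anisotropic Paley-type inequality; I would deduce it by $H^1_A$--$\mathrm{BMO}_A$ duality, i.e.\ by proving that for every $g\in L^2(\mu)$,
\begin{equation*}
\|\mathcal{F}(g\,\mathrm{d}\mu)\|_{\mathrm{BMO}_A}\lesssim\|g\|_{L^2(\mu)}.
\end{equation*}
This BMO bound is obtained by decomposing $g\,\mathrm{d}\mu$ according to the annuli $\{\rho_\ast=b^l\}$ and estimating the oscillation of its Fourier transform over an arbitrary ball $y_0+\Delta_j$: in the low-scale regime $l<-j$ a Taylor expansion of the exponential yields decay of the form $b^{(j+l)\zeta_-}$ via Proposition~\ref{Prop: comparison on the growth of the quasi norm and Euclidean norm}, while in the high-scale regime $l\ge-j$ one uses the uniform annulus bound \eqref{Condition: control of the ell^{infty} sum over annulus}, the two contributions being summed just as in the telescoping argument in the proof of Lemma~\ref{Lemma: A special construction of H^1 function under the L^2 case}.

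For the necessity I would plug $f=g\cdot\widehat{\chi_{k,\lambda}}$ into \eqref{Condition: the H^1 to F(L^p) inequality}, obtaining $\|\widehat{g}\ast\chi_{k,\lambda}\|_{L^p(\mu)}\lesssim b^{k/2}\|g\|_2$ whenever $\widehat{g}$ vanishes on $\Delta_k^\ast$ and $\lambda$ is large, and then dualize in $L^{p'}(\mu)$: testing against $\psi\in L^{p'}(\mu)$ gives
\begin{equation*}
\Big|\int\widehat{g}(y)\,h_\psi(y)\,\mathrm{d}y\Big|\lesssim b^{k/2}\|g\|_2\|\psi\|_{L^{p'}(\mu)},
\qquad
h_\psi(y)\coloneqq\int\chi_{k,\lambda}(x-y)\psi(x)\,\mathrm{d}\mu(x).
\end{equation*}
Taking the supremum over admissible $g$ and then choosing $\psi=\chi_E$ for $E$ running over translates of $(A^\ast)^k(cR^\ast)$, combined with the $r_i$ versus $r_i^\prime$ subdivision from the proof of the necessary part of the main theorem, reduces everything to a pointwise bound on $\mu\bigl(y+(A^\ast)^k(cR^\ast)\bigr)$: for $1\le p<2$ this yields the $\ell^{2/(2-p)}$ sum \eqref{Condition: control of the ell^{2/(2-p)} sum over rectangels}, and for $p\ge 2$ it yields the uniform bound \eqref{Condition: control of the ell^{infty} sum over annulus}. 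The main obstacle I anticipate is the $p=2$ BMO estimate above: in the isotropic case this is classical Paley, but the eccentricity of $\Delta_k^\ast$ under a general dilation $A$ forces one to invoke the exponent $\zeta_-$ to close the geometric series over scales, analogously to the condition $1<(d+1+2\lambda)\zeta_-$ appearing in Lemma~\ref{Lemma: A special construction of H^1 function under the L^2 case}.
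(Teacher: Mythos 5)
Your sufficiency argument for $1\le p<2$ is essentially identical to the paper's: run the $p=1$ proof on atoms, split into near and far regions via $(A^\ast)^{-(k+N)}(R^\ast)$, and replace Cauchy--Schwarz by H\"older with exponents $\frac{2}{2-p},\frac{2}{p}$, feeding in Proposition~\ref{Prop: control of the ell^2 norm over the translation of cubes} and Proposition~\ref{Fourier decay of atoms}. That part is fine.

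For the sufficiency when $p\ge2$ you take a genuinely different route: reducing to $p=2$ by $\int|\widehat f|^p\,\mathrm d\mu\le\|\widehat f\|_\infty^{p-2}\int|\widehat f|^2\,\mathrm d\mu$ and then proving the $p=2$ Paley inequality by $\mathrm{BMO}$ duality, i.e.\ $\|\mathcal F(g\,\mathrm d\mu)\|_{\mathrm{BMO}_A}\lesssim\|g\|_{L^2(\mu)}$. The paper instead works directly with the anisotropic square function characterization of $H^1_A$ (Theorem 2.8 of Liu--Yang--Yuan) and localizes $\widehat f$ to five consecutive Littlewood--Paley pieces on each $\rho_\ast$-annulus, which is shorter and also hands it the simple test functions $f=\Psi_{-l}$ for the necessity. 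Your $\mathrm{BMO}$ route is legitimate in spirit, but the sketch misses the delicate point: in the high-frequency regime $l\ge -j$ the naive bound $\|\mathcal F(g_l\,\mathrm d\mu_l)\|_\infty\lesssim\|g_l\|_{L^2(\mu)}$ does not sum (the series $\sum_{l\ge-j}\|g_l\|_{L^2(\mu)}$ need not converge), so one needs a genuine local $L^2$ orthogonality argument (e.g.\ smooth truncation on the ball $y_0+\Delta_j$ plus Plancherel), and this is precisely what the square-function machinery circumvents.

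The serious gap is in the necessity for $1\le p<2$. You keep $g\in L^2$ and Lemma~\ref{Lemma: A special construction of H^1 function under the L^2 case}, giving $\|\widehat g\ast\chi_{k,\lambda}\|_{L^p(\mu)}\lesssim b^{k/2}\|g\|_2$, then dualize once over $g\in L^2$ to get $\|h_\psi\|_{L^2((\Delta_k^\ast)^c)}\lesssim b^{k/2}\|\psi\|_{L^{p'}(\mu)}$ and finally pick $\psi=\chi_E$ for $E$ a single translate of $(A^\ast)^k(cR^\ast)$. Each such test yields a \emph{pointwise} inequality of the type $\mu(E)^2\lesssim\mu(E)^{2/p'}$, i.e.\ $\sup_E\mu(E)\lesssim1$; it cannot produce the summability $\sum_{\alpha\ne0}\mu\big((A^\ast)^kR^\ast(\alpha)\big)^{2/(2-p)}\lesssim1$, because a single indicator $\chi_E$ probes only one rectangle. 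Even if you dualize jointly and obtain the mixed-norm bound $\big\|\|\chi(\cdot-y)\|_{L^p(\mu)}\big\|_{L^2(\mathrm dy)}\lesssim b^{k/2}$, running the $r_i,r_i'$ decomposition then delivers the exponent $2/p$, not $2/(2-p)$, on the $\alpha$-sum. The paper's mechanism for getting the correct exponent is Lemma~\ref{Lemma: A special construction of H^1 function under the L^r case} (the $L^r$ analogue of your lemma, $\|f\|_{H^1}\lesssim b^{k/r}\|g\|_{L^r}$) together with the Hausdorff--Young inequality $\|g\|_r\le\|\widehat g\|_{r'}$ for $r\ge2$; the first duality is then taken over $\widehat g\in L^{r'}((\Delta_k^\ast)^c)$, producing an $L^r(\mathrm dy)$ bound whose exponent feeds through the $r_i,r_i'$ argument to give exactly the $\ell^{2/(2-p)}$ sum. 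Without introducing this $L^r$ variant of the Bochner--Riesz lemma, the necessity in the range $1<p<2$ does not close.

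The necessity for $p\ge 2$ is also done differently in the paper: rather than re-using $g\cdot\widehat{\chi_{k,\lambda}}$, it simply tests \eqref{Condition: the H^1 to F(L^p) inequality} with $f=\Psi_{-l}$, a single (anisotropic) Littlewood--Paley piece, and reads off the annulus bound from $\widehat{\Psi_{-l}}\gtrsim1$ on $(A^\ast)^{-l+1}(\Delta^\ast)\setminus(A^\ast)^{-l}(\Delta^\ast)$ and $\|\Psi_{-l}\|_{H^1}\approx1$. Your Bochner--Riesz route would require additional geometric work to convert the rectangle bound it produces into the annulus condition \eqref{Condition: control of the ell^{infty} sum over annulus}, whereas the paper's choice of test function gives it in one line.
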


\subsection{Proof of Theorem~\ref{The H^1 to L^p boundedness criterion}, when $1\leq p<2$} 
The proof at is somewhat similar to that of $p=1$.

\paragraph{\textit{The sufficient part}} We first assume that \eqref{Condition: control of the ell^{2/(2-p)} sum over rectangels} holds, it suffices to prove that for all $H^1$-atoms $a$,
\begin{equation*}
	\int|\widehat{a}|^p\,\mathrm{d}\mu
	\lesssim
	1.
\end{equation*}
Same as before, we assume that $a$ is supported in $\Delta_k$, and we  decompose the integral into two parts: integral over the region $(A^\ast)^{-(k+N)}(R^\ast)$, and over its complement.

 Using Proposition~\ref{Prop: covering large rectangle by finitely many small ones} and H\"older's inequality, for $k\leq-(l+N)$, 
\begin{equation*}
	\begin{aligned}
		\mu\Big((A^\ast)^{l}(R^\ast)\setminus(A^\ast)^{l-N}(R^\ast)\Big)
		&\leq
		\sum_{\alpha:\,\mathcal{O}_N(1)\textrm{ terms}}
		\mu\Big((A^\ast)^{l-N}(R^\ast(\alpha))\Big)\\
		&\lesssim
		\mathcal{O}_{N,p}(1)
		\bigg(\sum_{\alpha:\,\mathcal{O}_N(1)\textrm{ terms}}
		\mu\bigg((A^\ast)^{l-N}(R^\ast(\alpha))\bigg)^\frac{2}{2-p}\bigg)^{\frac{2-p}{2}}\\
		&\lesssim_{N,p}
		1.
	\end{aligned}
\end{equation*}
Now using Proposition~\ref{Fourier decay of atoms}, it is easy to prove that
\begin{equation*}
	\begin{aligned}
		1
		&\gtrsim
		\int_{(A^\ast)^{l}(R^\ast)\setminus(A^\ast)^{l-N}(R^\ast)}\frac{1}{\rho_{\ast}^{p\zeta_{-}}(\xi)}\rho_{\ast}^{p\zeta_{-}}(\xi)\,\mathrm{d}\mu\\
		&\gtrsim
		\frac{1}{b^{(l+k+N)p\zeta_{-}}}\int_{(A^\ast)^{l}(R^\ast)\setminus(A^\ast)^{l-N}(R^\ast)}|\widehat{a}(\xi)|^p\,\mathrm{d}\mu.
	\end{aligned}
\end{equation*} 
Summing over $-\infty<l\leq-(k+N)$ yields that $	\int_{(A^\ast)^{-(k+N)}(R^\ast)}|\widehat{a}|^p\,\mathrm{d}\mu
\lesssim
1$.

For the integral over $\R^d\setminus(A^\ast)^{-(k+N)}(R^\ast)$, we just notice that
\begin{equation*}
	\begin{aligned}
		\int_{\R^d\setminus(A^\ast)^{-(k+N)}(R^\ast)}|\widehat{a}|^p\,\mathrm{d}\mu
		&\leq
		\sum_{\alpha\in\Z^d\setminus\{(0,0)\}}\mu\big((A^\ast)^{-(k+N)}(R^\ast(\alpha))\big)\sup_{(A^\ast)^{-(k+N)}(R^\ast(\alpha))}|\widehat{a}|^p.
	\end{aligned}
\end{equation*}
Using \eqref{Basic inequality: ell^2 sum of atoms over small rectangles}, applying the H\"older's inequality for the pair $(\frac{2}{2-p},\frac{2}{p})$ gives the desired result.

\paragraph{\textit{The necessary part}} Now, on the contrary, we suppose that \eqref{Condition: the H^1 to F(L^p) inequality} holds. With a little effort, the following variant of Lemma~\ref{Lemma: A special construction of H^1 function under the L^2 case} is not difficult to prove.
\begin{lem}\label{Lemma: A special construction of H^1 function under the L^r case}
	Suppose that $1<r<\infty$. Let $g\in L^r(\R^d)$ and assume that $\widehat{g}=0$ on $\Delta_k^\ast$. If $f\coloneq g\cdot\widehat{\chi_{k,\lambda}}$, then for $\lambda$ sufficiently large,
	\begin{equation*}
		\|f\|_{H^1}
		\lesssim_{\lambda,d,r}
		b^{k/r}\|g\|_{L^r}.
	\end{equation*}
\end{lem}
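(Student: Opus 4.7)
The plan is to follow the strategy of Lemma~\ref{Lemma: A special construction of H^1 function under the L^2 case} verbatim, replacing the self-duality of $L^2$ by the $L^r$--$L^{r'}$ duality (where $r'=r/(r-1)$) and invoking John--Nirenberg to bound BMO oscillations at the exponent $r'$. As before, $\widehat{f}(0)=\int_{\Delta_k^\ast}\widehat{g}(-y)\chi_{k,\lambda}(y)\,\mathrm{d}y=0$ because $\supp\chi_{k,\lambda}\subset\Delta_k^\ast$ and $\widehat{g}=0$ there, so $f$ has vanishing integral. By $H^1$--$\mathrm{BMO}$ duality, for any $u\in\mathrm{BMO}$ and any constant $c$, H\"older's inequality gives
\begin{equation*}
\Big|\int fu\Big|=\Big|\int g\,\widehat{\chi_{k,\lambda}}\,(u-c)\Big|\leq \|g\|_r\Big(\int |u-c|^{r'}|\widehat{\chi_{k,\lambda}}|^{r'}\Big)^{1/r'}.
\end{equation*}
Taking $c=\Avg_{A^{-k}(\Delta_0)}(u)$, the problem reduces to establishing
\begin{equation*}
\int |u-c|^{r'}|\widehat{\chi_{k,\lambda}}|^{r'}\lesssim b^{k(r'-1)}\|u\|_{\mathrm{BMO}}^{r'},
\end{equation*}
since $(r'-1)/r'=1/r$, this bound yields $\|f\|_{H^1}\lesssim b^{k/r}\|g\|_r$ by duality.

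Next I would reuse the pointwise decay $|\widehat{\chi_{k,\lambda}}(\xi)|\lesssim b^k/(1+|A^k\xi|^{(d+1)/2+\lambda})$ already derived in the proof of the $r=2$ lemma from the Bochner--Riesz proposition, and perform the same splitting of the integral into the inner ball $A^{-k}(\Delta_0)$ and the annular regions $A^{l+1-k}(\Delta_0)\setminus A^{l-k}(\Delta_0)$ for $l\geq 0$. Proposition~\ref{Prop: comparison on the growth of the quasi norm and Euclidean norm} again gives $|A^kx|\gtrsim b^{l\zeta_-}$ on the $l$-th annulus. The BMO oscillation is controlled by John--Nirenberg at the exponent $r'$,
\begin{equation*}
\int_B|u-\Avg_B(u)|^{r'}\,\mathrm{d}x\lesssim_{r'}|B|\|u\|_{\mathrm{BMO}}^{r'},
\end{equation*}
combined with the telescoping estimate $|\Avg_{A^{l+1-k}(\Delta_0)}(u)-\Avg_{A^{-k}(\Delta_0)}(u)|\lesssim (l+1)\|u\|_{\mathrm{BMO}}$ already established in the previous proof.

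Putting these together, the inner-ball contribution is $\lesssim b^{kr'}\cdot b^{-k}\|u\|_{\mathrm{BMO}}^{r'}=b^{k(r'-1)}\|u\|_{\mathrm{BMO}}^{r'}$, and the $l$-th annular contribution is
\begin{equation*}
\lesssim \frac{b^{k(r'-1)}\cdot b^{l+1}\bigl(1+(l+1)^{r'}\bigr)}{b^{l((d+1)/2+\lambda)r'\zeta_-}}\|u\|_{\mathrm{BMO}}^{r'}.
\end{equation*}
Choosing $\lambda$ large enough so that $1<((d+1)/2+\lambda)r'\zeta_-$ makes the series in $l$ summable, giving the required bound $\lesssim b^{k(r'-1)}\|u\|_{\mathrm{BMO}}^{r'}$. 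The only new ingredient beyond the $r=2$ case is John--Nirenberg at the exponent $r'$, which is standard; the main bookkeeping, and the step where something could plausibly go wrong, is the identification $(r'-1)/r'=1/r$, which delivers the correct exponent $b^{k/r}$ in the statement. Note that as $r\to 1^+$ we have $r'\to\infty$, making the threshold on $\lambda$ easier to satisfy, while as $r\to\infty$ the condition degenerates to the original one $1<((d+1)/2+\lambda)\zeta_-$, so no extra constraint is needed uniformly in $r$ on any bounded subinterval of $(1,\infty)$.
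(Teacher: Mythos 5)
Your proof is correct and takes exactly the route the paper intends: the paper omits the proof, stating it is a step-by-step modification of Lemma~\ref{Lemma: A special construction of H^1 function under the L^2 case} and recording the threshold $\lambda>\frac{1}{r'\zeta_-}-\frac{d+1}{2}$, which is precisely the condition $1<\big(\frac{d+1}{2}+\lambda\big)r'\zeta_-$ you derived. Replacing Cauchy--Schwarz by H\"older with the $(r,r')$ pair, using the John--Nirenberg (equivalently, $\mathrm{BMO}_{A,q}$ norm equivalence) at exponent $r'$, and the bookkeeping $(r'-1)/r'=1/r$ are exactly the required modifications.
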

We omit the proof of this lemma since it is a step-by-step modification of that of Lemma~\ref{Lemma: A special construction of H^1 function under the L^2 case}. Regarding the meaning of “sufficiently large”, a careful calculation shows that any $\lambda$ strictly greater than $\frac{1}{r^\prime\zeta_{-}}-\frac{d+1}{2}$ works, where $r^\prime\coloneqq r/(r-1)$ is the conjugate exponent of $r$.

We apply $f=g\cdot\widehat{\chi_{k,\lambda}}$ to \eqref{Condition: the H^1 to F(L^p) inequality}, it follows that $\left(\int|\widehat{g}\ast\chi_{k,\lambda}|^p\,\mathrm{d}\mu\right)^{\frac{1}{p}}
\lesssim
b^{\frac{k}{r}}\|g\|_r$. By duality, for all $h\in L^{p^\prime}(\mathrm{d}\mu)$,
\begin{equation*}
	\left|\int\left(\widehat{g}\ast\chi_{k,\lambda}\right)(x)\cdot h(x)\,\mathrm{d}\mu(x)\right|
	\lesssim
	b^{\frac{k}{r}}\|g\|_r\|h\|_{L^{p^\prime}(\mathrm{d}\mu)}.
\end{equation*}
By Fubini's theorem,
\begin{equation*}
	\left|\int_{(\Delta^\ast_k)^c}\widehat{g}(y)\int\chi_{k,\lambda}(x-y) h(x)\,\mathrm{d}\mu(x)\,\mathrm{d}y\right|
	\lesssim
	b^{\frac{k}{r}}\|g\|_r\|h\|_{L^{p^\prime}(\mathrm{d}\mu)}.
\end{equation*}
Now we choose $r=2/(2-p)\geq2$, since $1\leq p<2$. Hausdorff--Young inequality says that $\|g\|_r\leq \|\widehat{g}\|_{r^\prime}$, so
\begin{equation*}
	\left|\int_{(\Delta^\ast_k)^c}\widehat{g}(y)\int\chi_{k,\lambda}(x-y) h(x)\,\mathrm{d}\mu(x)\,\mathrm{d}y\right|
	\lesssim
	b^{\frac{k}{r}}
	\|\widehat{g}\|_{r^\prime}
	\|h\|_{L^{p^\prime}(\mathrm{d}\mu)}.
\end{equation*}
By duality,
\begin{equation*}
	\left\|\int\chi_{k,\lambda}(x-y) h(x)\,\mathrm{d}\mu(x)\right\|_{L^r\left((\Delta^\ast_k)^c,\mathrm{d}y\right)}
	\lesssim
	b^{\frac{k}{r}}
	\|h\|_{L^{p^\prime}(\mathrm{d}\mu)},
\end{equation*}
and again by duality,
\begin{equation*}
	\left\|
	\left\|
	\chi_{k,\lambda}(x-y)
	\right\|_{L^p\left(\R^d,\mathrm{d}\mu(x)\right)} \right\|_{L^r\left((\Delta^\ast_k)^c,\mathrm{d}y\right)}
	\lesssim
	b^{\frac{k}{r}}.
\end{equation*}
Therefore,
\begin{equation*}
	\begin{aligned}
		\int_{(\Delta^\ast_k)^c}\bigg|\int\chi_{k,\lambda}(x-y)\,\mathrm{d}\mu(x)\bigg|^\frac{r}{p}\,\mathrm{d}y
		\lesssim
		b^k.
	\end{aligned}
\end{equation*}
The rest part of the proof is similar to that of $p=1$.
\subsection{Proof of Theorem~\ref{The H^1 to L^p boundedness criterion}, when $2\leq p<\infty$} This is somewhat different from the case $1\leq p<2$. When $d=1$ (hence isotropic) and $p=2$, Odysseas Bakas gives a proof in \cite{Bakas}, Proposition~8, which works well under the anisotropic settings after minor modifications. The proof of this part is essentially due to Bakas.
 
We begin with a square function characterization of $H^1$. Let $\eta$ be a Schwartz function on $\R^d$ whose Fourier transform is nonnegative, supported in $(A^\ast)^2(\Delta^\ast)\setminus(A^\ast)^{-1}(\Delta^\ast)$, equal to $1$ on $(A^\ast)(\Delta^\ast)\setminus\Delta^\ast$. We define $\Psi$ via the Fourier transform,
\begin{equation*}
	\widehat{\Psi}(\xi)
	\coloneqq
	\frac{\eta(\xi)}{\sum_{j\in\Z}\eta\left((A^\ast)^{-j}\xi\right)}.
\end{equation*}
It is easy to verify that $\widehat{\Psi}\in C^\infty_{c}(\R^d)$, and $\widehat{\Psi}(\xi)\gtrsim1$ on $(A^\ast)(\Delta^\ast)\setminus\Delta^\ast$, moreover,
\begin{equation*}
	\sum_{j\in\Z}\widehat{\Psi}\left((A^\ast)^{-j}\xi\right)
	=1,
	\text{ when }\xi\neq0.
\end{equation*}
Simple calculation shows that $\widehat{\Psi}\left((A^\ast)^{-j}\xi\right)=\widehat{\Psi_j}(\xi)$, where $\Psi_j(x)\coloneqq b^j\Psi(A^jx)$.  The following theorem is proved in \cite{Liu--Yang--Yuan}, Theorem~2.8.
\begin{thm_no_numbering}[Square function characterization of anisotropic $H^1$]\label{Square function characterization of H^1}
	Assume that $f$ is a tempered distribution. The square function (or the g-function) of $f$ is defined by
	\begin{equation*}
		g(f)(x)
		\coloneqq
		\left(\sum_{j\in\Z}\left|\Psi_j\ast f\right|^2(x)\right)^{\frac{1}{2}}.
	\end{equation*}
If $f\in H^1$, then $\|f\|_{H^1}\approx\|g(f)\|_{L^1}$.
\end{thm_no_numbering}
With these preparations in place, we can begin the proof.
\paragraph{\textit{The sufficient part}}Assume that \eqref{Condition: control of the ell^{2/(2-p)} sum over rectangels} holds, we decompose the integral into the sum
\begin{equation*}
	\int_{\R^d}|\widehat{f}(\xi)|^p\,\mathrm{d}\mu(\xi)
	=
	\sum_{l\in\Z}	\int_{(A^\ast)^l(\Delta^\ast)\setminus(A^\ast)^{l-1}(\Delta^\ast)}|\widehat{f}(\xi)|^p\,\mathrm{d}\mu(\xi).
\end{equation*}
We fix an $l\in\Z$ and assume that $\xi\in(A^\ast)^l(\Delta^\ast)\setminus(A^\ast)^{l-1}(\Delta^\ast)$, which is equivalent to $\rho_\ast(\xi)=b^{l-1}$. It is not difficult to see that the sum $\sum_{j\in\Z}\widehat{\Psi}\left((A^\ast)^{-j}\xi\right)$ is 
\begin{equation*}
\sum_{l-2\leq j\leq l+2}\widehat{\Psi}\left((A^\ast)^{-j}\xi\right),
\end{equation*}
which has only 5 terms. So we can write $\widehat{f}(\xi)$ as
\begin{equation*}
	\widehat{f}(\xi)\cdot 1
	=
	\widehat{f}(\xi)\sum_{l-2\leq j\leq l+2}\widehat{\Psi}\left((A^\ast)^{-j}\xi\right)
	=
	\sum_{l-2\leq j\leq l+2}\widehat{\Psi_j\ast f}(\xi).
\end{equation*}
Taking supremum over $\xi$,
\begin{equation*}
	\sup_{\xi\in(A^\ast)^l(\Delta^\ast)\setminus(A^\ast)^{l-1}(\Delta^\ast)}\left|\widehat{f}(\xi)\right|
	\leq
	\sum_{l-2\leq j\leq l+2}\left\|\widehat{\Psi_j\ast f}\right\|_\infty
	\leq
	\sum_{l-2\leq j\leq l+2}\left\|\Psi_j\ast f\right\|_1.
\end{equation*}
Integrating over $(A^\ast)^l(\Delta^\ast)\setminus(A^\ast)^{l-1}(\Delta^\ast)$ shows that
\begin{equation*}
	\begin{aligned}
		\int_{(A^\ast)^l(\Delta^\ast)\setminus(A^\ast)^{l-1}(\Delta^\ast)}|\widehat{f}|^p\,\mathrm{d}\mu
		&\leq
		\mu\left((A^\ast)^l(\Delta^\ast)\setminus(A^\ast)^{l-1}(\Delta^\ast)\right)
		\left(\sum_{l-2\leq j\leq l+2}\left\|\Psi_j\ast f\right\|_1\right)^p\\
		&\lesssim
		\sum_{l-2\leq j\leq l+2}\left\|\Psi_j\ast f\right\|_1^p.
	\end{aligned}
\end{equation*}
where we used the hypothesis \eqref{Condition: control of the ell^{infty} sum over annulus} and the trivial estimate $\left(\sum_{i=1}^5|a_i|\right)^p\lesssim\sum_{i=1}^5|a_i|^p$. Summing over $l\in\Z$, and use the condition $p\geq2$,
\begin{equation*}
	\begin{aligned}
		\left(\int|\widehat{f}|^p\,\mathrm{d}\mu\right)^\frac{1}{p}
		\lesssim
		\left(\sum_{j\in\Z}\left\|\Psi_j\ast f\right\|_1^p\right)^\frac{1}{p}
		\lesssim
		\left(\sum_{j\in\Z}\left\|\Psi_j\ast f\right\|_1^2\right)^\frac{1}{2}.
	\end{aligned}
\end{equation*}
Finally, the integral form of Minkowski's inequality yields
\begin{equation*}	\left(\sum_{j\in\Z}\left\|\Psi_j\ast f\right\|_1^2\right)^\frac{1}{2}
\leq
\left\|\left(\sum_{j\in\Z}\left|\Psi_j\ast f\right|^2\right)^{\frac{1}{2}}\right\|_1
\approx
\|f\|_{H^1}.
\end{equation*}
\paragraph{\textit{The necessary part}}Assume that \eqref{Condition: the H^1 to F(L^p) inequality} holds. For $l\in\Z$, we choose $f\coloneqq \Psi_{-l}$. A direct calculation shows that
\begin{equation*}
	\begin{aligned}
		\|f\|_{H^1}
		\approx
		\Big\|\Big(\sum_{-l-2\leq j\leq -l+2}|\Psi_j\ast f|^2\Big)^\frac{1}{2}\Big\|_{1}
		\approx
		\sum_{-l-2\leq j\leq -l+2}\|\Psi_j\ast f\|_{1}.
	\end{aligned}
\end{equation*}
After a change of variable, the last sum is
\begin{equation*}
		\sum_{j=-2}^2\int\left|\int\Psi(z)\Psi(x-A^jz)\,\mathrm{d}z\right|\,\mathrm{d}x
		\approx
		1.
\end{equation*}
Therefore, the $H^1$ norm of $\Psi_{-l}$ is independent of $l$. From the construction of $\Psi$, 
\begin{equation*}
	\widehat{\Psi_{-l}}(\xi)\gtrsim1 \text{ on } (A^\ast)^{-l+1}(\Delta^\ast)\setminus(A^\ast)^{-l}(\Delta^\ast).
\end{equation*}
So for $l\in\Z$,
\begin{equation*}
	\mu\left((A^\ast)^{-l+1}(\Delta^\ast)\setminus(A^\ast)^{-l}(\Delta^\ast)\right)^{\frac{1}{p}}
	\lesssim
	\left(\int_{\R^d}|\widehat{\Psi_{-l}}(\xi)|^p\,\mathrm{d}\mu(\xi)\right)^{\frac{1}{p}}
	\lesssim
	\|\Psi_{-l}\|_{H^1(\R^d)}
	\approx
	1,
\end{equation*}
where all implicit constants in $\lesssim$ are independent of $l$. Consequently, the inequality \eqref{Condition: control of the ell^{infty} sum over annulus} holds.

\printbibliography
\end{document}